\documentclass{article}
\usepackage{amsmath, amssymb, amsthm, latexsym, mathtools}
\usepackage{authblk, color}

\title{Dirichlet-Poincar\'{e} profiles of graphs and groups}
\author{David Hume\thanks{The author was supported by a Titchmarsh Fellowship of the University of Oxford}}

\affil{Mathematical Institute, University of Oxford, OX2 6GG, UK}
\date{\today}

\newtheorem{proposition}{Proposition}[section]
\newtheorem{theorem}[proposition]{Theorem}
\newtheorem{bthm}{Theorem}[]
\newtheorem{bcor}[bthm]{Corollary}
\newtheorem{bprop}[bthm]{Proposition}
\newtheorem{bqu}[bthm]{Question}
\newtheorem{bconj}[bthm]{Conjecture}

\newtheorem{corollary}[proposition]{Corollary}
\newtheorem{lemma}[proposition]{Lemma}

\theoremstyle{definition}
\newtheorem{defn}[proposition]{Definition}
\newtheorem{question}[proposition]{Question}

\newcommand{\set}[1]{\left\{#1\right\}}
\newcommand{\setcon}[2]{\left\{\left.#1\,\right|\,#2\right\}}

\newcommand{\colsetcon}[2]{\left\{#1\,:\,#2\right\}}

\newcommand{\R}{\mathbb{R}}
\newcommand{\Z}{\mathbb{Z}}
\newcommand{\N}{\mathbb{N}}

\newcommand{\norm}[1]{\left\lVert#1\right\rVert}
\newcommand{\abs}[1]{\left\lvert#1\right\rvert}

\newcommand{\tu}{\textup}

\newcommand{\diam}{\tu{diam}}

\begin{document}
\maketitle
\begin{abstract}
\noindent We define Poincar\'{e} profiles of Dirichlet type for graphs of bounded degree, in analogy with the Poincar\'{e} profiles (of Neumann type) defined in \cite{HumeMackTess1}. The obvious first definition yields nothing of interest, but an alternative definition yields a spectrum of profiles which are quasi--isometry invariants and monotone with respect to subgroup inclusion. Moreover, in the extremal cases $p=1$ and $p=\infty$, they detect the F\o lner function and the growth function respectively.
\end{abstract}

\section{Introduction}
In \cite{HumeMackTess1}, a spectrum of monotone coarse invariants for bounded degree graphs were introduced: these were called $L^p$-Poincar\'{e} profiles and are defined for $p\in[1,\infty]$. At the extremes $p=1$ and $p=\infty$, the profiles detect the separation profile (see \cite{BenSchTim-12-separation-graphs}) and the growth function of the graph respectively.

A more accurate name is $L^p$-\textbf{Neumann}--Poincar\'{e} profiles, since they are built from Poincar\'{e} constants of Neumann type.

The goal of this paper is to provide both a short and a long answer to the following question pointed out to us by Laurent Saloff--Coste:

\begin{bqu} What happens if you replace Poincar\'e constants of Neumann type by Poincar\'e constants of \textbf{Dirichlet type}?
\end{bqu}

%

Let us fix some notation. Let $\Gamma$ be a finite graph and let $f:V\Gamma\to\R$. We define $\nabla f:V\Gamma\to\R$ by $\nabla f(v)=\max\setcon{\abs{f(v)-f(w)}}{vw\in E\Gamma}$.

Given a graph $X$ and a finite subgraph $\Gamma\leq X$ we define the $L^p$-Dirichlet--Poincar\'{e} constant of $\Gamma$ as follows: 
\[
 Dh^p_X(\Gamma)=\inf\setcon{\frac{\norm{\nabla f}_p}{\norm{f}_p}}{f:V\Gamma\to\R,\ f|_{\partial_X\Gamma}\equiv 0,\ f\not\equiv 0},
\]
where $\partial_X\Gamma=\setcon{v\in V\Gamma}{d_X(v,VX\setminus V\Gamma)=1}$. We make the convention that $Dh^p_X(\Gamma)=+\infty$ whenever $\Gamma=\partial_X\Gamma$. Notice that replacing $f$ by $\abs{f}$ does not change the norm, preserves the property $f|_{\partial_X \Gamma}\equiv 0$ and does not increase $\norm{\nabla f}_p$. Therefore we may always assume that all functions we consider are non-negative. When the ambient graph $X$ is clear we will simply write $Dh^p(\Gamma)$ for $Dh^p_X(\Gamma)$.

Following \cite{HumeMackTess1} we \textit{could} define the $L^p$-Dirichlet-Poincar\'e profile of $X$ by
\[
 D^*\Lambda^p_X(n)=\sup\setcon{\abs{\Gamma}Dh^p_X(\Gamma)}{\Gamma\leq X,\ \partial_X\Gamma\neq  V\Gamma,\ \abs{V\Gamma}\leq n}.
\]
While $D^*\Lambda^p$ defines a monotone coarse invariant, it is sadly not a very interesting one, providing the short answer to the question.

\begin{bprop}\label{bprop:detectinf} Let $X$ be a connected infinite graph with maximal vertex degree $d$. Then
\[
 D^*\Lambda^p_X(n) =
 \left\{
  \begin{array}{rl}
   (d+1)^\frac1p n & \textup{if } p<\infty, \\
   n & \textup{if } p=\infty.
  \end{array}
 \right.
\]
\end{bprop}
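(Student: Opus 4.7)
The plan is to prove matching upper and lower bounds using the single-vertex test function $f = \delta_{v_0}$.

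For the upper bound, given any finite $\Gamma \leq X$ with $\partial_X\Gamma \neq V\Gamma$, I fix an interior vertex $v_0 \in V\Gamma \setminus \partial_X\Gamma$. Interiority forces $N_X(v_0) \subseteq V\Gamma$, so taking $f = \delta_{v_0}$ gives $\norm{f}_p = 1$ while $\nabla f$ equals $1$ at $v_0$ and at each of its $\deg_X(v_0) \leq d$ $X$-neighbors, and vanishes elsewhere. Hence for $p < \infty$, $\norm{\nabla f}_p^p = 1 + \deg_X(v_0) \leq d+1$, giving $Dh^p_X(\Gamma) \leq (d+1)^{1/p}$; and for $p = \infty$, $\norm{\nabla f}_\infty = 1$, giving $Dh^\infty_X(\Gamma) \leq 1$. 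Multiplying by $\abs{V\Gamma} \leq n$ yields the stated upper bound in both cases.

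For the lower bound, the key observation is that when $\Gamma$ has \emph{exactly one} interior vertex $v_0$, the constraint $f|_{\partial_X\Gamma} \equiv 0$ forces $f$ to be a scalar multiple of $\delta_{v_0}$, and the previous inequality becomes the equality $Dh^p_X(\Gamma) = (1+\deg_X(v_0))^{1/p}$. The strategy is therefore to construct, for each $n$, a subgraph $\Gamma_n$ of size $n$ whose unique interior vertex has $X$-degree equal to $d$. I fix $v_0 \in VX$ with $\deg_X(v_0) = d$, start with $\Gamma_0 = \{v_0\} \cup N_X(v_0)$, and adjoin $n - d - 1$ extra vertices chosen as an independent set in $VX \setminus B_X(v_0,2)$. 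Such extras are non-adjacent in $X$ both to $\Gamma_0$ and to each other, so they appear as isolated vertices of $\Gamma_n$ that automatically lie in $\partial_X\Gamma_n$ and contribute nothing to $\norm{\nabla f}_p$ for any admissible $f$. The required independent set exists by a greedy argument exploiting bounded degree and infiniteness of $X$.

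The main technical obstacle is verifying that $v_0$ really is the \emph{unique} interior vertex of $\Gamma_n$: this requires every $w \in N_X(v_0)$ to possess at least one $X$-neighbor outside $\Gamma_0$, which can fail, for instance if some $w$ is a leaf attached only to $v_0$. Such degenerate configurations must be handled by a small local modification, e.g.\ by choosing the base vertex $v_0$ more carefully or by a separate treatment of the trivial $d \leq 1$ case. For the $p = \infty$ statement, the degree of $v_0$ is immaterial and only uniqueness of the interior vertex is required: with $\Gamma_n$ as above, $Dh^\infty_X(\Gamma_n) = 1$, yielding $\abs{V\Gamma_n}\,Dh^\infty_X(\Gamma_n) = n$.
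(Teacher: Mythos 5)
Your strategy is the same as the paper's: the upper bound comes from the test function $\delta_{v_0}$ at an interior vertex, and the lower bound from a subgraph of size $n$ whose interior is a single vertex of maximal degree, padded with far-away boundary vertices. The paper simply \emph{asserts} the existence of such a $\Gamma_n$ (writing, with an evident typo, $\partial_X\Gamma_n=\set{v}$ where it must mean $V\Gamma_n\setminus\partial_X\Gamma_n=\set{v}$), so your write-up is, if anything, more explicit about what is needed. Your upper bound is correct as stated.

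However, the obstacle you flag in the lower bound is not a removable technicality, and neither of your proposed fixes (choosing $v_0$ more carefully, or treating $d\leq 1$ separately) can work in general, because for $p<\infty$ the statement itself fails for some graphs in the stated generality. Take $X$ to be the infinite ray $v_1v_2v_3\cdots$ with a pendant leaf $u_i$ attached to each $v_i$, so $d=3$. Every degree-$3$ vertex $v_i$ has the leaf $u_i$ as a neighbour, so any $\Gamma$ whose interior contains $v_i$ also has $u_i$ in its interior, and the admissible function $1_{\set{u_i}}$ gives $Dh^p_X(\Gamma)\leq 2^{1/p}$; a direct computation shows that subgraphs whose interior consists only of leaves have $Dh^p_X(\Gamma)$ exactly $2^{1/p}$. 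Since every subgraph with nonempty interior falls into one of these two cases, $D^*\Lambda^p_X(n)=2^{1/p}n<(d+1)^{1/p}n$ for $p<\infty$. So no local modification closes the gap: the lower bound (yours and the paper's) is valid precisely when $X$ contains a degree-$d$ vertex $v_0$ each of whose neighbours is adjacent to some vertex outside $\set{v_0}\cup N_X(v_0)$ --- true for Cayley graphs of infinite groups, which is presumably the intended setting, but not for arbitrary connected bounded-degree graphs. The $p=\infty$ conclusion does survive in general, since there one only needs some $\Gamma$ containing a ball of radius $1$ but no ball of radius $2$ (so that $Dh^\infty_X(\Gamma)=1$), which does not require the interior to be a single vertex and can always be arranged; but you should say this rather than route it through the same unique-interior-vertex construction.
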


Instead, let us make the following alternative definition. Let $X$ be a graph of bounded degree. The $L^p$-\textbf{Dirichlet-Poincar\'e profile} of $X$ is given by
\[
 D\Lambda^p_X(n) = \inf\colsetcon{\abs{\Gamma}Dh^p_X(\Gamma)}{\Gamma\leq X,\ \abs{V\Gamma}\geq n}.
\]
In general, these profiles are not monotone coarse invariants (nor even monotone under quasi-isometric embeddings), but they are quasi-isometry invariants. 

\begin{bthm}\label{thm:DirPoinQI} Let $X,Y$ be quasi-isometric graphs of bounded degree. Then, for every $p\in[1,\infty]$,
\[
 D\Lambda^p_X(n) \simeq  D\Lambda^p_Y(n).
\]
\end{bthm}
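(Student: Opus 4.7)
The strategy parallels the quasi-isometry invariance proof for the Neumann-Poincaré profile in \cite{HumeMackTess1}; the new ingredient is transporting the Dirichlet boundary condition. After the usual reductions we may assume there is a surjective $(L,A)$-quasi-isometry $q\colon X\to Y$ between graphs of maximum degree at most $d$, and by symmetry it suffices to show $D\Lambda^p_Y(n)\leq C\cdot D\Lambda^p_X(cn)$ for constants depending only on $L,A,d,p$. Concretely, given any $\Gamma_X\leq X$ with $\abs{V\Gamma_X}\geq cn$ and any non-negative $f\colon V\Gamma_X\to\R$ vanishing on $\partial_X\Gamma_X$ and realising $Dh^p_X(\Gamma_X)$ up to $\varepsilon$, the task is to construct $\Gamma_Y\leq Y$ with $\abs{V\Gamma_Y}\geq n$ and a function $\tilde f$ on $V\Gamma_Y$ vanishing on $\partial_Y\Gamma_Y$ with
\[
 \abs{V\Gamma_Y}\norm{\nabla\tilde f}_p/\norm{\tilde f}_p \leq C\,\abs{V\Gamma_X}\norm{\nabla f}_p/\norm{f}_p.
\]

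First I extend $f$ by zero to $\hat f\colon VX\to\R_{\geq 0}$. Since every vertex of $V\Gamma_X$ adjacent to $VX\setminus V\Gamma_X$ lies in $\partial_X\Gamma_X$ (where $f\equiv 0$), a short case check gives $\norm{\hat f}_p=\norm{f}_p$ and $\norm{\nabla\hat f}_p=\norm{\nabla f}_p$. Next I push $\hat f$ across $q$ via
\[
 \tilde f(w)=\max\setcon{\hat f(v)}{v\in VX,\ d_Y(q(v),w)\leq A}
\]
(empty max equal to $0$), and take $\Gamma_Y$ to be the subgraph of $Y$ induced on the $(A+1)$-neighbourhood of $q(V\Gamma_X)$. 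Since the support of $\tilde f$ is contained in the $A$-neighbourhood of $q(V\Gamma_X)$, which lies in the interior of $\Gamma_Y$, the function $\tilde f$ automatically vanishes on $\partial_Y\Gamma_Y$.

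Three estimates then have to be verified. The cardinality bound $c_1\abs{V\Gamma_X}\leq\abs{V\Gamma_Y}\leq C_1\abs{V\Gamma_X}$ follows from bounded fibres of $q$ (a consequence of the quasi-isometry inequality together with bounded degree) and bounded ball growth in $Y$. The lower bound on $\norm{\tilde f}_p$ is immediate: $\tilde f(q(v))\geq\hat f(v)$ for every $v\in VX$, so $\sum_v\hat f(v)^p\leq\sum_w\abs{q^{-1}(w)}\,\tilde f(w)^p\leq C\,\norm{\tilde f}_p^p$, giving $\norm{\tilde f}_p\geq c_2\norm{f}_p$. The gradient bound is the crux: for each edge $ww'\in E\Gamma_Y$ one picks a witness $v\in VX$ attaining the maximum defining $\tilde f(w)$, and a companion $v'\in VX$ with $d_Y(q(v'),w')\leq A$ and $d_X(v,v')\leq K:=L(1+2A)+L$; then $\abs{\tilde f(w)-\tilde f(w')}$ is at most a telescoping sum of $\nabla\hat f$ along a fixed $X$-path of length $\leq K$ joining $v$ to $v'$, and summing $p$-th powers over $EY$ via Jensen's inequality, and using that each vertex of $X$ lies on boundedly many such paths, yields $\norm{\nabla\tilde f}_p\leq C_3\norm{\nabla f}_p$.

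The main obstacle is keeping these three estimates compatible with the Dirichlet boundary condition. Defining $\tilde f$ by a pointwise maximum is what produces the clean gradient bound, but the support of $\tilde f$ can collapse to a much smaller set than $q(V\Gamma_X)$ when $\Gamma_X$ has a large boundary-to-interior ratio; this is why $\Gamma_Y$ is defined via the $(A+1)$-neighbourhood of $q(V\Gamma_X)$ rather than via the support of $\tilde f$, which simultaneously forces $\partial_Y\Gamma_Y$ to lie strictly outside the support of $\tilde f$ and secures $\abs{V\Gamma_Y}\asymp\abs{V\Gamma_X}$. Once the constants are tracked, combining the three estimates yields the one-sided bound on $D\Lambda^p$, and symmetrising completes the proof.
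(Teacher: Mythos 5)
Your construction is essentially the paper's: push $f$ forward by taking the maximum of $\abs{f}$ over points whose $q$-image lies in an $A$-ball, take the target subgraph to be a bounded neighbourhood of $q(V\Gamma_X)$, bound $\norm{\tilde f}_p$ from below via the bounded fibres of $q$, and bound $\norm{\nabla \tilde f}_p$ by telescoping $\nabla f$ along bounded-length $X$-paths between witnesses together with a bounded-multiplicity count (the paper packages this last step as Lemma \ref{lem:changethickness} on the thickened gradient $\nabla_a$). The only cosmetic difference is that you take the $(A+1)$-neighbourhood so that $\tilde f$ vanishes on $\partial_Y\Gamma_Y$ automatically, whereas the paper takes the $C$-neighbourhood and imposes $f'\equiv 0$ on $\partial_Y\Gamma'$ by fiat; both work and yield the same constants up to bookkeeping.
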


We consider functions using a standard partial order: given two functions $f,g:\N\to\R$, we write $f\lesssim g$ if there exists a constant $C$ such that $f(n)\leq Cg(Cn)+C$ for all $n\in\N$, and $f\simeq g$ if $f\lesssim g$ and $g\lesssim f$. We write $f\lesssim_{u,v,\ldots} g$ to indicate that the constant $C$ depends on $u,v,\ldots$.
\medskip
Hence, Dirichlet-Poincar\'e profiles are well-defined for finitely generated groups. Moreover, they behave monotonically with respect to subgroups.

\begin{bthm}\label{thm:Dirsubgp} Let $H$ be a finitely generated subgroup of a finitely generated group $G$. Then, for every $p\in[1,\infty]$,
\[
 D\Lambda^p_H(n) \lesssim  D\Lambda^p_G(n).
\]
\end{bthm}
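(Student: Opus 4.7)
The plan is to lift a near-optimal Dirichlet--Poincar\'e configuration in $G$ to one in $H$ by splitting it along left $H$-cosets and reassembling the translated pieces inside $H$. I assume $H$ is infinite throughout; the case of finite $H$ is vacuous once $n>\abs{H}$ under the natural convention extending $D\Lambda^p_H$ past its last finite value. By Theorem~\ref{thm:DirPoinQI} I may pick generating sets freely, so start from arbitrary finite symmetric generating sets $S_G^0$ of $G$ and $S_H^0$ of $H$, set $S_G:=S_G^0\cup S_H^0$ and then $S_H:=S_G\cap H$; the new $S_H$ still generates $H$ because it contains $S_H^0$, while $S_H\subseteq S_G$ and $S_G\cap H=S_H$ by construction. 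With this setup, the induced subgraph of $\tu{Cay}(G,S_G)$ on each left coset $gH$ is precisely $g\cdot\tu{Cay}(H,S_H)$.

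Fix $n$ and choose $\Gamma\leq\tu{Cay}(G,S_G)$ with $\abs{V\Gamma}\geq n$ together with a nonnegative $f:V\Gamma\to\R$ vanishing on $\partial_G\Gamma$ and realising $D\Lambda^p_G(n)$ up to $\epsilon$. Let $R$ be a transversal of left cosets. For each $r\in R$ with $V\Gamma\cap rH\neq\emptyset$ define $\Gamma_r\leq\tu{Cay}(H,S_H)$ by $V\Gamma_r=r^{-1}(V\Gamma\cap rH)$ and $(h,h')\in E\Gamma_r$ iff $(rh,rh')\in E\Gamma$; these are genuine $S_H$-edges thanks to $S_G\cap H=S_H$. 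Set $f_r(h):=f(rh)$. Direct counting gives
\[
 \sum_r\abs{V\Gamma_r}=\abs{V\Gamma},\quad\sum_r\norm{f_r}_p^p=\norm{f}_p^p,\quad\sum_r\norm{\nabla f_r}_p^p\leq\norm{\nabla f}_p^p,
\]
with the obvious $\max$-analogues when $p=\infty$. The Dirichlet condition also transfers: if $h\in\partial_H\Gamma_r$ then some $hs\in H\setminus V\Gamma_r$, so $r(hs)$ is a $G$-neighbour of $rh$ outside $V\Gamma$, forcing $rh\in\partial_G\Gamma$ and hence $f_r(h)=0$.

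Only finitely many $\Gamma_r$ are nonempty, so using that $H$ is infinite I pick distinct $h_r\in H$ making the vertex sets $h_r V\Gamma_r$ pairwise disjoint. Let $\Gamma'\leq\tu{Cay}(H,S_H)$ be the subgraph with vertex set $\bigsqcup_r h_r V\Gamma_r$ and edge set $\bigsqcup_r h_r E\Gamma_r$ (no inter-piece edges), and set $f'(h_r h):=f_r(h)$. Since $\Gamma'$ has no cross-piece edges, $\nabla f'(h_r h)=\nabla f_r(h)$, and any $H$-neighbour of $h_r h$ outside $V\Gamma'$ is in particular outside $h_r V\Gamma_r$, so $\partial_H\Gamma'\subseteq\bigsqcup_r h_r\partial_H\Gamma_r$ and $f'|_{\partial_H\Gamma'}\equiv0$. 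Combining,
\[
 \abs{V\Gamma'}\,Dh^p_H(\Gamma')\leq\abs{V\Gamma'}\,\frac{\norm{\nabla f'}_p}{\norm{f'}_p}\leq\abs{V\Gamma}\,\frac{\norm{\nabla f}_p}{\norm{f}_p}\leq D\Lambda^p_G(n)+\epsilon,
\]
and since $\abs{V\Gamma'}=\abs{V\Gamma}\geq n$, letting $\epsilon\to 0$ yields $D\Lambda^p_H(n)\leq D\Lambda^p_G(n)$ for these Cayley graphs, which Theorem~\ref{thm:DirPoinQI} upgrades to $\lesssim$ for arbitrary generating sets. The one delicate step is the Dirichlet transfer, which is what forces the arrangement $S_G\cap H=S_H$ (so that each $\Gamma_r$ is genuinely a subgraph of $\tu{Cay}(H,S_H)$) and is then carried to $\Gamma'$ via the absence of cross-piece edges, keeping $\partial_H\Gamma'$ inside the union of piecewise boundaries where $f'$ already vanishes.
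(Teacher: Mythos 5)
Your proof is correct, and its core step --- cutting $\Gamma$ and $f$ along the left $H$-cosets they meet and checking that the Dirichlet (boundary-vanishing) condition transfers to each piece --- is exactly the paper's. Where you diverge is the finishing move. The paper applies a pigeonhole inequality, $\min_i a_i/b_i \leq \left(\sum_i a_i\right)/\left(\sum_i b_i\right)$, to select a \emph{single} coset piece $\Gamma_i$ with $Dh^p_H(\Gamma_i)\leq Dh^p_G(\Gamma)$; since $\abs{\Gamma_i}$ may be far smaller than $n$, it then invokes Lemma \ref{lem:nesting} to enlarge $\Gamma_i$ to an $m$-vertex subgraph without increasing the constant. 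You instead keep \emph{all} the pieces, translate them to pairwise disjoint positions inside $Cay(H,S_H)$ (possible as $H$ is infinite) and take their disjoint union $\Gamma'$, which has exactly $\abs{V\Gamma}\geq n$ vertices, so no enlargement is needed. Both routes are valid; yours has the minor advantages of treating $p=\infty$ uniformly (the paper defers that case to Proposition \ref{prop:inftycase}) and of normalising the generating sets so that $S_H=S_G\cap H$, which makes each coset-induced subgraph a genuine copy of $Cay(H,S_H)$, where the paper only assumes $S\subseteq T$ and implicitly discards the extra edges. One point worth flagging explicitly: your $\Gamma'$ is deliberately not an induced subgraph of $Cay(H,S_H)$ (you omit cross-piece edges so that $\nabla f'$ splits); this is legitimate under the paper's conventions, since the profile is an infimum over arbitrary subgraphs and $\partial_H$ depends only on the vertex set, but it deserves a sentence of justification.
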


The remainder of the paper is devoted to initial properties of these quasi-isometry invariants organised in direct analogy with the corresponding theory of Neumann-Poincar\'e profiles. We begin with the extremal cases $p=1$ and $p=\infty$.

For $p=\infty$ the Dirichlet-Poincar\'e profile also depends only on growth.

\begin{bprop} Let $X$ be a bounded degree graph.
\[
 D\Lambda^\infty_X(n) \simeq \min\setcon{\frac{m}{\underline{\kappa}(m)}}{m\geq n}
\]
where $\underline{\kappa}(m)=\max\setcon{k}{\exists x:\ \abs{B(x,k)}\leq m}$ is the (lower) inverse growth function.
\end{bprop}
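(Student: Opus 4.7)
My plan is to identify $Dh^\infty_X(\Gamma)$ with the reciprocal of the geometric inradius $R_\Gamma := \max_{v\in V\Gamma} d_\Gamma(v,\partial_X\Gamma)$, translate this inradius into an ambient ball of $X$, and then unwind the definition of $\underline{\kappa}$.

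\textbf{Step 1: inradius formula.} First I would show $Dh^\infty_X(\Gamma) = 1/R_\Gamma$. The test function $f(v) = \max\{0, R_\Gamma - d_\Gamma(v,v_0)\}$, with $v_0$ realising $R_\Gamma$, vanishes on $\partial_X\Gamma$ (every boundary vertex being at $\Gamma$-distance $\geq R_\Gamma$ from $v_0$), has $\norm{f}_\infty = R_\Gamma$ and $\norm{\nabla f}_\infty\leq 1$, giving the upper bound. Conversely, any non-negative $f\not\equiv 0$ vanishing on $\partial_X\Gamma$ realises its max at some $v_1$, and along a shortest $\Gamma$-path from $v_1$ to $\partial_X\Gamma$ -- of length at most $R_\Gamma$ and total variation $\norm{f}_\infty$ -- some edge must contribute at least $\norm{f}_\infty/R_\Gamma$ to $\norm{\nabla f}_\infty$.

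\textbf{Step 2: the geometric lemma.} The crucial observation is that $B_X(v_0,R_\Gamma) \subseteq V\Gamma$ whenever $v_0$ attains $R_\Gamma$. If $w$ were a counterexample, the last vertex $u$ of $V\Gamma$ along a shortest $X$-path from $v_0$ to $w$ would lie in $\partial_X\Gamma$, while the initial segment (which stays in $V\Gamma$) witnesses $d_\Gamma(v_0,u) < R_\Gamma$, contradicting the definition of $R_\Gamma$. This is the only point where the Dirichlet condition -- that $\partial_X\Gamma$ tracks vertices whose neighbourhood escapes $V\Gamma$ in the \emph{ambient} graph $X$ rather than in $\Gamma$ -- plays an essential role, and I expect it to be the main technical obstacle.

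\textbf{Step 3: matching both bounds to $\underline{\kappa}$.} For the lower bound, Step 2 gives $\abs{B_X(v_0,R_\Gamma)}\leq \abs{V\Gamma}$ whenever $\abs{V\Gamma}\geq n$, so $R_\Gamma \leq \underline{\kappa}(\abs{V\Gamma})$ and
\[
 \abs{V\Gamma}Dh^\infty_X(\Gamma) = \frac{\abs{V\Gamma}}{R_\Gamma} \geq \frac{\abs{V\Gamma}}{\underline{\kappa}(\abs{V\Gamma})} \geq \min\setcon{\frac{m}{\underline{\kappa}(m)}}{m\geq n}.
\]
For the matching upper bound, let $m^*\geq n$ attain this minimum, set $k=\underline{\kappa}(m^*)$, choose $x$ with $\abs{B_X(x,k)}\leq m^*$, and take $\Gamma = B_X(x,k+1)$: the maximality of $k$ forces $\abs{V\Gamma}>m^*\geq n$; bounded degree $d$ yields $\abs{V\Gamma}\leq (d+1)m^*$; and $R_\Gamma\geq k+1$. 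Hence $\abs{V\Gamma}/R_\Gamma \leq (d+1)m^*/k$, matching the lower bound up to a multiplicative constant depending only on $d$.
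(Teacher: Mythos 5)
Your argument is correct and takes essentially the paper's approach: the paper defines $l_\Gamma$ directly as the radius of the largest $X$-ball contained in $\Gamma$, proves $Dh^\infty(\Gamma)=l_\Gamma^{-1}$ using the same cone test function and the same path/triangle-inequality lower bound, and then passes to $\underline{\kappa}$ exactly as in your Step 3 (your Step 2 just verifies that your intrinsic inradius $R_\Gamma$ agrees with the paper's $l_\Gamma$). One small repair in Step 2: take $u$ to be the \emph{first} vertex on the path whose successor leaves $V\Gamma$, rather than the \emph{last} vertex of $V\Gamma$ on the path, since for the latter the initial segment need not stay inside $V\Gamma$; with that change the lemma holds as you intend.
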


In the same way that $\Lambda^1$ can be expressed in terms of the Cheeger constants of finite graphs, $D\Lambda^1$ is related to the Cheeger constants of infinite graphs.

We recall that the Cheeger constant of an infinite graph is given by
\[
 h(X) = \inf\setcon{\frac{\abs{\partial_X A}}{\abs{A}}}{A\subset VX,\ \abs{A}<\infty}
\]
\begin{bthm}\label{bthm:DPprofamen} Let $X$ be a connected graph of bounded degree. Then the following are equivalent
\begin{itemize}
 \item $h(X)>0$,
 \item $D\Lambda^1_X(n)\simeq n$,
 \item $D\Lambda^p_X(n)\simeq n$ for every $p\in[1,\infty)$.
\end{itemize} 
\end{bthm}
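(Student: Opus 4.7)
The plan is to prove the cyclic chain $(1)\Rightarrow(3)\Rightarrow(2)\Rightarrow(1)$. The implication $(3)\Rightarrow(2)$ is free by specialising to $p=1$, and the matching upper bound $D\Lambda^p_X(n)\lesssim n$ always holds thanks to Proposition~\ref{bprop:detectinf}, which gives $Dh^p_X(\Gamma)\leq (d+1)^{1/p}$ for every finite $\Gamma$. So the real work lies in establishing the lower bound $D\Lambda^p_X(n)\gtrsim n$ in $(1)\Rightarrow(3)$, and in ruling out such a bound when $h(X)=0$ in $(2)\Rightarrow(1)$.

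For $(1)\Rightarrow(3)$ I would fix $p\in[1,\infty)$, a finite $\Gamma\leq X$ and a nonnegative $f\colon V\Gamma\to\R$ vanishing on $\partial_X\Gamma$, then extend $f$ by zero on $VX\setminus V\Gamma$ so that every superlevel set $S_t=\setcon{v\in VX}{f(v)>t}$ is a finite subset of $VX$ to which the isoperimetric inequality $\abs{\partial_X S_t}\geq h(X)\abs{S_t}$ applies. For $p=1$ this should reduce to a standard Cheeger/co-area computation: the edge co-area identity $\sum_{uv\in EX}\abs{f(u)-f(v)}=\int_0^\infty\abs{\partial_E^X S_t}\,dt$, together with the edge version of the Cheeger inequality $\abs{\partial_E^X S_t}\geq h(X)\abs{S_t}$ and the pointwise bound $\sum_{uv}\abs{f(u)-f(v)}\leq d\norm{\nabla f}_1$, give $Dh^1_X(\Gamma)\gtrsim_d h(X)$. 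To pass from $p=1$ to general $p$ I would apply the $p=1$ inequality to $g=f^p$, using the pointwise estimate $\nabla(f^p)(v)\leq p\,f^*(v)^{p-1}\nabla f(v)$ with $f^*(v)=\max(f(v),\max_{w\sim v}f(w))$, H\"older's inequality with exponents $p$ and $p/(p-1)$, and the degree-based bound $\norm{f^*}_p\leq(d+1)^{1/p}\norm{f}_p$, to conclude $\norm{f}_p\lesssim_{p,d}h(X)^{-1}\norm{\nabla f}_p$, and hence $Dh^p_X(\Gamma)\gtrsim_{p,d}1$ uniformly in $\Gamma$.

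For $(2)\Rightarrow(1)$ I would contrapose. If $h(X)=0$ then, since $X$ is infinite and connected so every finite proper $A\subset VX$ has $\abs{\partial_X A}\geq 1$, one can extract finite sets $A_n\subset VX$ with $\abs{A_n}\to\infty$ and $\abs{\partial_X A_n}/\abs{A_n}\to 0$. The natural witness is to let $\Gamma_n$ be the subgraph induced on $A_n$ together with its outer $X$-neighbours, and to test with $f=\mathbf{1}_{A_n}$: this function vanishes on $\partial_X\Gamma_n$, satisfies $\norm{f}_1=\abs{A_n}$, $\norm{\nabla f}_1\leq(d+1)\abs{\partial_X A_n}$, and the containing graph has $\abs{V\Gamma_n}\leq(d+1)\abs{A_n}$. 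Together these force $D\Lambda^1_X(\abs{V\Gamma_n})/\abs{V\Gamma_n}\to 0$ along the sequence, contradicting $D\Lambda^1_X(n)\simeq n$.

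The step I expect to be the main technical obstacle is the passage from $p=1$ to general $p$ in $(1)\Rightarrow(3)$, which requires the $f^p$-substitution and the accompanying H\"older/degree bookkeeping; the $p=1$ case and the indicator-function test in $(2)\Rightarrow(1)$ amount to standard Cheeger/co-area arguments once the vertex/edge boundary equivalence (up to a factor of $d$) is in hand.
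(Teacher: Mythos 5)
Your proof is correct, and it uses the same basic toolkit as the paper (co-area formula over superlevel sets, indicator functions of F\o lner sets, and the power substitution $f\mapsto f^{p}$ with the mean value theorem and H\"older), but it routes the implications differently. The paper works with the negated statements: it shows $D\Lambda^1_X\not\simeq n$ implies $h(X)=0$ by extracting a \emph{single} bad superlevel set from a near-optimal test function, shows $h(X)=0$ implies $D\Lambda^p_X\not\simeq n$ for \emph{all} $p$ at once via indicator functions of F\o lner sets (so the general-$p$ work sits in the easy direction), and then closes the loop for the unnegated equivalence by invoking the separately proved monotonicity $D\Lambda^p_X\lesssim D\Lambda^q_X$ for $p\leq q$ (Proposition \ref{prop:DPmonotonep}). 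You instead prove the uniform $L^p$ Cheeger-type bound $Dh^p_X(\Gamma)\gtrsim_{p,d}h(X)$ directly, integrating the isoperimetric inequality over all superlevel sets rather than extracting one, and pushing from $p=1$ to general $p$ by applying the $L^1$ bound to $f^p$; this makes Proposition \ref{prop:DPmonotonep} unnecessary for this theorem, at the cost of redoing essentially the same $f\mapsto f^{q/p}$ computation that the paper uses to prove that proposition. Your version of $(2)\Rightarrow(1)$ then only needs the F\o lner indicator construction for $p=1$, which is the paper's argument restricted to one exponent. Two small points you rightly flag but should keep in mind when writing this up: the vertex/edge boundary discrepancy costs a factor depending on $d$ only, and the whole statement implicitly requires subgraphs to be full (induced) subgraphs of $X$, since otherwise $\nabla f$ computed in $\Gamma$ does not control differences of $f$ across edges of $X$ joining vertices of $\Gamma$.
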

When $X$ is the Cayley graph of a finitely generated group $G$, $h(X)>0$ if and only if $G$ is non-amenable (this is commonly known as F\o lner's criterion). From this we may easily deduce that $D\Lambda^1_X$ is not monotone under quasi-isometric embeddings, since solvable groups of exponential growth admit undistorted free sub-semigroups \cite{CorTes-trees-in-solv}, so there is a quasi-isometric embedding of a $4$-regular tree (a Cayley graph of the non-amenable free group on two generators) into a solvable (and hence amenable) group.

Moreover, one can completely express $D\Lambda^1_X$ in terms of the \textbf{F\o lner function} of $X$:
\[
F_X(n)=\min\colsetcon{\abs{\Gamma}}{\Gamma\leq X,\ \frac{\abs{\partial_X\Gamma}}{\abs{\Gamma}}\leq \frac{1}{n}}.
\]

\begin{bthm}\label{bcor:FGDh1Fol} Let $X$ be a bounded degree graph. Then, for $p\in(1,\infty)$,
\[
 D\Lambda^p_X(n) \lesssim \min\setcon{\frac{m}{(\underline{F}_X(m)})^\frac1p}{m\geq n}
\]
where $\underline{F}_X(n)$ is the inverse F\o lner function $\underline{F}_X(n)=\max\setcon{k}{F(k)\leq n}$. In the case $p=1$ we have the stronger result
\[
 D\Lambda^p_X(n) \simeq \min\setcon{\frac{m}{\underline{F}_X(m)}}{m\geq n}.
\]
\end{bthm}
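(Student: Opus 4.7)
The plan has two halves: an upper bound valid for every $p\in[1,\infty)$, proved by plugging near-optimal F\o lner sets into the variational definition of $Dh^p_X$, and a matching lower bound at $p=1$, proved by a coarea (layer-cake) argument.

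\textbf{Upper bound.} Fix $m\geq n$ and set $k=\underline{F}_X(m)$. By the very definition of $F_X$ there is $\Gamma^*\leq X$ with $|V\Gamma^*|\leq m$ and $|\partial_X\Gamma^*|\leq |V\Gamma^*|/k$. I then choose a test subgraph $\Gamma\supseteq\Gamma^*$ with $\max(n,|V\Gamma^*|)\leq|V\Gamma|\leq m$: if $|V\Gamma^*|\geq n$ I take $\Gamma=\Gamma^*$, otherwise (using that $X$ is infinite) I pad $\Gamma^*$ with arbitrary further vertices of $X$ until $|V\Gamma|=n$. The test function is $f=1_{V\Gamma^*\setminus\partial_X\Gamma^*}$, extended by zero to $V\Gamma$. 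The crucial geometric observation is that every vertex of $V\Gamma^*\setminus\partial_X\Gamma^*$ has all its $X$-neighbours inside $V\Gamma^*$; consequently the extra vertices in $V\Gamma\setminus V\Gamma^*$ neither touch the support of $f$ nor create any new nonzero gradient. Hence $\|f\|_p^p\geq(1-1/k)|V\Gamma^*|$ and $\|\nabla f\|_p^p\leq(d+1)|\partial_X\Gamma^*|$, yielding $|V\Gamma|Dh^p_X(\Gamma)\lesssim m/\underline{F}_X(m)^{1/p}$. Minimising over $m\geq n$ gives the claimed upper bound for all $p\in[1,\infty)$.

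\textbf{Lower bound at $p=1$.} Let $\Gamma\leq X$ with $N:=|V\Gamma|\geq n$ and let $f\geq 0$ vanish on $\partial_X\Gamma$. Writing $A_t=\{v\in V\Gamma:f(v)>t\}$, the layer-cake formula gives $\|f\|_1=\int_0^\infty|A_t|\,dt$, while a vertex-by-vertex check shows
\[
\|\nabla f\|_1 \;\geq\; \int_0^\infty|\partial_X A_t|\,dt,
\]
because for each $v$ the time-set $\{t>0:v\in\partial_X A_t\}$ is an interval of length at most $\nabla f(v)$. For $t>0$, $A_t\subseteq V\Gamma\setminus\partial_X\Gamma$ is a finite subset of $VX$ with $|A_t|\leq N<F_X(\underline{F}_X(N)+1)$, so the definition of $F_X$ forces $|\partial_X A_t|/|A_t|\geq 1/(\underline{F}_X(N)+1)$ whenever $A_t\neq\emptyset$. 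Integrating, $Dh^1_X(\Gamma)\gtrsim 1/\underline{F}_X(N)$ and hence
\[
|V\Gamma|Dh^1_X(\Gamma)\;\gtrsim\;\frac{N}{\underline{F}_X(N)}\;\geq\;\min_{m\geq n}\frac{m}{\underline{F}_X(m)},
\]
which completes the proof upon taking the infimum over $\Gamma$.

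\textbf{Main obstacle.} The only delicate step is the upper bound in the regime $|V\Gamma^*|<n$: a priori the F\o lner-optimal subgraph of size $\leq m$ need not witness $D\Lambda^p_X(n)$, which requires $|V\Gamma|\geq n$. The padding trick resolves this precisely because the Dirichlet test function is supported on the interior of $\Gamma^*$, which by definition has no edges to $VX\setminus V\Gamma^*$; so adding vertices contributes neither to the denominator nor to the numerator of the Rayleigh quotient. Everything else is standard coarea bookkeeping and arithmetic with $\underline{F}_X$.
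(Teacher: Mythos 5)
Your proof is correct and follows essentially the same route as the paper: the upper bound plugs in the indicator function of the interior of a near-optimal F\o lner set and uses monotonicity of $Dh^p$ under enlarging the subgraph (the paper's Lemma \ref{lem:nesting}, which your padding observation re-derives), and the lower bound at $p=1$ is the same coarea/level-set argument. The only cosmetic difference is that you apply the coarea bound directly to an arbitrary test function, whereas the paper routes it through the maximal $k$ with $Dh^1(\Gamma)\leq 1/k$; these are equivalent.
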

Since F\o lner functions of amenable groups have been extensively studied there are a number of immediate consequences of this result, some of which we list in \S\ref{sec:Folner}. 

Grigorchuk and Pansu conjecture that the F\o lner function grows either polynomially, or at least exponentially \cite[Conjecture 5(ii)]{Grig_Milnor}. Reinterpreted in terms of Dirichlet-Poincar\'e profiles this is equivalent to the following:

\begin{bconj}\label{conj} Let $G$ be a finitely generated group. Either there is some $d$ such that $D\Lambda^p_G(n)\simeq n^{1-\frac1d}$ for all $p\in[1,\infty)$, or $D\Lambda^p_G(n)\gtrsim \frac{n}{\log(n)}$ for all $p\in[1,\infty)$.
\end{bconj}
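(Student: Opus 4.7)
The plan is to derive the conjecture as a direct reformulation of the Grigorchuk--Pansu dichotomy on F\o lner functions, using Theorem \ref{bcor:FGDh1Fol} as the translation. Since Grigorchuk--Pansu itself is open, the realistic goal is to establish this equivalence rather than to provide an independent resolution.

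For $p=1$ the equivalence is purely computational. By Theorem \ref{bcor:FGDh1Fol}, $D\Lambda^1_G(n) \simeq \min\{m/\underline{F}_G(m) : m \geq n\}$. If $F_G(n) \simeq n^d$, then $\underline{F}_G(m) \simeq m^{1/d}$ and $m^{1-1/d}$ is nondecreasing, so the minimum is attained at $m = n$ and $D\Lambda^1_G(n) \simeq n^{1-1/d}$. If $F_G(n) \gtrsim e^n$, then $\underline{F}_G(m) \lesssim \log m$, giving $m/\underline{F}_G(m) \gtrsim m/\log m$; as this is eventually increasing, the minimum over $m \geq n$ is $\simeq n/\log n$. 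The two asymptotics $n^{1-1/d}$ and $n/\log n$ are incompatible with any intermediate F\o lner growth, so the $p=1$ case is precisely equivalent to Grigorchuk--Pansu.

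For $p \in (1,\infty)$ I would treat the two regimes separately. In the polynomial regime, $F_G(n) \simeq n^d$ forces polynomial growth of degree $d$, hence virtual nilpotence by Gromov's theorem. The upper bound $D\Lambda^p_G(n) \lesssim n^{1-1/d}$ comes from tent test functions $f(v) = \max\{0, r - d(v,x)\}$ supported on balls $B(x,r)$: a direct count gives $\norm{f}_p^p \simeq r^{p+d}$ and $\norm{\nabla f}_p^p \simeq r^d$, so $Dh^p(B(x,r)) \lesssim 1/r$ and $\abs{B(x,r)}Dh^p(B(x,r)) \lesssim r^{d-1} \simeq n^{1-1/d}$. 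The matching lower bound would invoke the classical Sobolev--Poincar\'e inequality on virtually nilpotent groups: every finite $\Gamma$ satisfies $Dh^p(\Gamma) \gtrsim \abs{\Gamma}^{-1/d}$ via the coarea formula combined with $\abs{\partial A} \gtrsim \abs{A}^{1-1/d}$, which multiplied by $\abs{\Gamma} \geq n$ gives $|\Gamma|Dh^p(\Gamma) \gtrsim n^{1-1/d}$. In the super-polynomial regime, the natural route is a monotonicity inequality $D\Lambda^p_G \gtrsim D\Lambda^1_G$, obtained by applying H\"older to the optimal Dirichlet test function, and then invoking the $p=1$ case.

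The principal obstacle is that the conjecture cannot be settled independently of Grigorchuk--Pansu: no current technique rules out a group with F\o lner growth intermediate between $n^d$ and $e^n$, and such a group would automatically falsify the proposed dichotomy for $D\Lambda^p_G$. Any genuine resolution must therefore pass through new structural results on groups of intermediate growth, which lies well beyond the techniques introduced in this paper.
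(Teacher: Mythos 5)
The statement you were asked about is a \emph{conjecture}: the paper offers no proof, only the one-sentence assertion that it is the Grigorchuk--Pansu dichotomy for F\o lner functions ``reinterpreted in terms of Dirichlet--Poincar\'e profiles.'' You correctly recognise this, and your proposal --- to prove the translation rather than the dichotomy itself --- is exactly the content the paper leaves implicit, so there is no question of a wrong approach. Your $p=1$ computation from Theorem \ref{bcor:FGDh1Fol} is correct in the direction that matters (Grigorchuk--Pansu implies the conjecture); if you really want the claimed \emph{equivalence} you should also write out the converse, e.g.\ that $D\Lambda^1_G(n)\gtrsim n/\log n$ forces $F_G(k)/k\gtrsim F_G(k)/\log F_G(k)$ and hence $F_G(k)\gtrsim e^{ck}$. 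Two of the ingredients you propose to derive by hand are already theorems in the paper and you could simply cite them: the monotonicity $D\Lambda^p_G\gtrsim D\Lambda^1_G$ for $p>1$ is Theorem \ref{bthm:pleqq} (so no separate H\"older argument is needed in the super-polynomial regime), and the polynomial regime for all $p$ is Theorem \ref{bcor:GcontFolpairs}, since $F_G(n)\simeq n^d$ forces polynomial growth by Coulhon--Saloff-Coste, hence virtual nilpotence by Gromov, and such groups admit controlled F\o lner pairs --- which is precisely your tent-function argument. Two small points to tidy: non-amenable $G$ (where $F_G$ is undefined) falls trivially into the second alternative by Theorem \ref{bthm:DPprofamen}, and in the exponential regime the minimum over $m\geq n$ is only bounded below by $n/\log n$, not necessarily $\simeq$ it --- but that is all the conjecture asks for. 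Your closing paragraph is the honest bottom line: no argument in or out of this paper settles the statement unconditionally.
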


Continuing the comparison with Neumann-Poincar\'e profiles, we see that Dirichlet-Poincar\'e profiles are also monotonic with respect to $p\in[1,\infty)$, however the relationship with $D\Lambda^\infty$ is very different:

\begin{bthm}\label{bthm:pleqq} Let $X$ be a connected graph of bounded degree. Then for all $1\leq p\leq q<\infty$:
\[
 D\Lambda^p_X(n)\lesssim D\Lambda^q_X(n).
\]
\end{bthm}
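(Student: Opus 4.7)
\medskip
\noindent\textbf{Proof plan.} The plan is to prove the stronger pointwise statement $Dh^p_X(\Gamma)\leq C(p,q,d)\,Dh^q_X(\Gamma)$ for every finite subgraph $\Gamma\leq X$, where $d$ is the maximum degree of $X$. Multiplying by $\abs{V\Gamma}$ and taking the infimum over $\Gamma$ with $\abs{V\Gamma}\geq n$ then yields $D\Lambda^p_X(n)\leq C(p,q,d)\,D\Lambda^q_X(n)$, hence $D\Lambda^p_X\lesssim D\Lambda^q_X$.

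For the pointwise bound I would use the standard power-substitution $f=g^{q/p}$. Given a non-negative admissible test function $g$ for $Dh^q_X(\Gamma)$, the fact that $q/p\geq 1$ ensures $f$ is again admissible: it is non-negative, not identically zero, and vanishes on $\partial_X\Gamma$. A direct computation gives $\norm{f}_p^p=\norm{g}_q^q$, hence $\norm{f}_p=\norm{g}_q^{q/p}$.

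For the gradient, the elementary mean-value inequality $\abs{a^{q/p}-b^{q/p}}\leq \tfrac{q}{p}\max(a,b)^{q/p-1}\abs{a-b}$ applied edgewise gives $\nabla f(v)\leq \tfrac{q}{p}M(v)^{q/p-1}\nabla g(v)$, where $M(v)=\max\setcon{g(w)}{w=v\textup{ or }vw\in E\Gamma}$. Raising to the $p$-th power, summing over $v$, and applying H\"older's inequality with conjugate exponents $q/(q-p)$ and $q/p$ bounds $\norm{\nabla f}_p^p$ by $(\tfrac{q}{p})^p(\sum_v M(v)^q)^{(q-p)/q}\norm{\nabla g}_q^p$. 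The bounded-degree hypothesis then gives $\sum_v M(v)^q\leq (1+d)\norm{g}_q^q$, and combining everything yields $\norm{\nabla f}_p/\norm{f}_p\leq \tfrac{q}{p}(1+d)^{(q-p)/(pq)}\norm{\nabla g}_q/\norm{g}_q$. The pointwise bound follows by taking the infimum over $g$.

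The main step requiring care is that $\nabla$ here is defined as a maximum over neighbours rather than a sum: this is why the auxiliary quantity $M(v)$ appears in the mean-value step, and bounded degree is used precisely to absorb $\sum_v M(v)^q$ back into $\norm{g}_q^q$ up to a multiplicative constant. The case $p=q$ is trivial, and once these technicalities are handled the argument is otherwise the same power-substitution used in the continuous setting.
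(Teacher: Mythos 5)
Your proof is correct, and it follows the same core strategy as the paper's Proposition \ref{prop:DPmonotonep}: the power substitution $f=g^{q/p}$, the identity $\norm{f}_p^p=\norm{g}_q^q$, an edgewise mean-value estimate producing a local maximum of $g$, and H\"older with exponents $q/(q-p)$ and $q/p$. The one place you genuinely diverge is in disposing of the term $\sum_v M(v)^{q-p}\nabla g(v)^p$. The paper bounds its $g_1(v)$ (your $M(v)$) by $g(v)+\nabla g(v)$, splits via $(s+t)^\alpha\leq 2^\alpha(s^\alpha+t^\alpha)$, and then must invoke the a priori reduction to test functions with $\norm{\nabla g}_q\leq (d+1)^{1/q}\norm{g}_q$ to control the resulting $\norm{\nabla g}_q^q$ term. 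You instead apply H\"older first and then absorb $\sum_v M(v)^q$ directly into $(1+d)\norm{g}_q^q$ by the counting bound $M(v)^q\leq\sum_{w\in B(v,1)}g(w)^q$ and a change of order of summation; this is cleaner, eliminates both the splitting step and the a priori reduction, and yields the explicit constant $\frac{q}{p}(1+d)^{(q-p)/(pq)}$. Your formulation as a pointwise inequality $Dh^p_X(\Gamma)\leq C(p,q,d)\,Dh^q_X(\Gamma)$ for every finite subgraph (trivially valid also when $Dh^q_X(\Gamma)=+\infty$) is also slightly stronger than the paper's statement about near-optimal $\Gamma$, though the two are interchangeable for the profile inequality.
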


\begin{bthm}\label{bthm:inftyleq1} Let $G$ be a finitely generated group. Then
\[
 D\Lambda^\infty_G(n) \lesssim D\Lambda^1_G(n).
\]
\end{bthm}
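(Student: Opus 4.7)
The plan is to combine the extremal characterizations established earlier in the paper with a classical isoperimetric inequality for finitely generated groups. By the proposition computing $D\Lambda^\infty$ and by Theorem \ref{bcor:FGDh1Fol}, for any Cayley graph of $G$ we have
\[
D\Lambda^\infty_G(n) \simeq \min_{m\geq n} \frac{m}{\underline{\kappa}(m)}, \qquad D\Lambda^1_G(n) \simeq \min_{m\geq n} \frac{m}{\underline{F}_G(m)}.
\]
It therefore suffices to compare the inverse growth function $\underline{\kappa}$ with the inverse F\o lner function $\underline{F}_G$. A multiplicative bound of the form $\underline{F}_G(m)\leq C\underline{\kappa}(m)$ transfers directly to $m/\underline{F}_G(m)\gtrsim m/\underline{\kappa}(m)$, and then to the required relation between the two profiles after taking minima over $m\geq n$.

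The bridge is supplied by the classical Coulhon--Saloff-Coste isoperimetric inequality for finitely generated groups: for any finite $A\subseteq G$,
\[
\frac{|\partial A|}{|A|} \gtrsim \frac{1}{\underline{\kappa}(|A|)}.
\]
Rearranging, if $|\partial A|/|A|\leq 1/n$ then $\underline{\kappa}(|A|)\gtrsim n$; since $\underline{\kappa}$ is the inverse of the (homogeneous) growth function on a Cayley graph, this forces $|A|\gtrsim|B(e,cn)|$ for some $c>0$. Thus $F_G(n)\gtrsim|B(e,cn)|$, and inverting this inequality yields $\underline{F}_G(m)\lesssim\underline{\kappa}(m)$, which is exactly what we need to feed back into the minima above.

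The main obstacle is really the appeal to the Coulhon--Saloff-Coste inequality itself, whose proof uses the group structure via translations in an essential way. This also explains why the theorem is restricted to finitely generated groups rather than general bounded degree graphs: without translation invariance one has no reason to expect the F\o lner function to dominate the growth function, and the argument does not transfer. The remaining steps are routine bookkeeping with the constants and with the fact that $n\mapsto \min_{m\geq n}f(m)$ is monotone.
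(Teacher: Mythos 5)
Your argument is correct and follows essentially the same route as the paper, which derives the theorem as a direct consequence of Varopoulos' inequality (i.e.\ the Coulhon--Saloff-Coste isoperimetric inequality, giving $F_G(n)\gtrsim\abs{B(e,cn)}$ and hence $\underline{F}_G(m)\lesssim\underline{\kappa}(m)$) combined with the extremal characterizations of $D\Lambda^\infty$ and $D\Lambda^1$. The only point you leave implicit is that the $p=1$ characterization via the F\o lner function is stated for $h(X)=0$; for non-amenable $G$ the conclusion is anyway immediate, since $D\Lambda^1_G(n)\simeq n$ while $D\Lambda^\infty_G(n)\leq n$ always.
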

Theorem \ref{bthm:inftyleq1} is a direct consequence of Varopoulos' inequality (cf.\ \textup{\cite[Th\'eor\`eme 1]{CoulSalCoste}})
\medskip

We finish with some upper bounds coming from geometric properties of groups. From \cite[Proposition 9.5]{HumeMackTess1} we see that groups with finite linearly-controlled asymptotic dimension (cf. \cite{Ass}) satisfy $\Lambda^p_X(n)\lesssim \Lambda^\infty_X(n)$ for all $p\in[1,\infty)$. For Dirichlet-Poincar\'e profiles this follows from Theorem \ref{bcor:FGDh1Fol} in the case $p=1$ via work of Nowak.

\begin{bcor}\label{bcor:GamenfinAN}\textup{\cite[Theorem 7.1]{Nowak_isop}} Let $G$ be a finitely generated amenable group with finite linearly-controlled asymptotic dimension. Then
\[
 D\Lambda^1_G(n) \simeq \frac{n}{\kappa(n)}
\]
where $\kappa(n)$ is the inverse growth function $\kappa(n)=\min\setcon{k}{\abs{B(1,k)}> n}$.
\end{bcor}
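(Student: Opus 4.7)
The plan is to sandwich $D\Lambda^1_G(n)$ between $n/\kappa(n)$ on both sides, using the two extremal-$p$ characterisations already established in the paper; Nowak's theorem supplies the one nontrivial input.

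For the upper bound, I would apply the $p=1$ case of Theorem~\ref{bcor:FGDh1Fol} to reduce to a statement about the F\o lner function:
\[
D\Lambda^1_G(n) \simeq \min\setcon{\frac{m}{\underline{F}_G(m)}}{m \geq n}.
\]
The essential input is then Nowak's Theorem~7.1, which says that under the standing hypotheses balls of radius $r$ are asymptotically F\o lner with linear control, i.e.\ $F_G(n) \leq V_G(Cn)$ for some constant $C$. Inverting this gives $\underline{F}_G(m) \gtrsim \kappa(m)$, so
\[
D\Lambda^1_G(n) \lesssim \min\setcon{\frac{m}{\kappa(m)}}{m \geq n} \leq \frac{n}{\kappa(n)}.
\]

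For the matching lower bound, I would use Theorem~\ref{bthm:inftyleq1} to deduce $D\Lambda^\infty_G(n) \lesssim D\Lambda^1_G(n)$, and then apply the proposition identifying $D\Lambda^\infty_X$ with $\min_{m\ge n} m/\underline{\kappa}(m)$. On a Cayley graph every ball of radius $k$ has cardinality $V_G(k)$, so $\underline{\kappa}(m) = \kappa(m)-1 \simeq \kappa(m)$, giving $D\Lambda^\infty_G(n) \simeq \min_{m \geq n} m/\kappa(m)$. The map $m \mapsto m/\kappa(m)$ is essentially non-decreasing on any infinite Cayley graph, a consequence of the submultiplicativity $V_G(r+s) \leq V_G(r) V_G(s)$ together with $\kappa$ being $1$-Lipschitz; consequently $\min_{m \geq n} m/\kappa(m) \simeq n/\kappa(n)$, and $D\Lambda^1_G(n) \gtrsim n/\kappa(n)$.

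The main obstacle is Nowak's theorem itself, which is quoted rather than reproved; everything else is a matter of assembling and inverting the characterisations already in the paper, with the only routine nuisance being the verification that $\min_{m \geq n} m/\kappa(m) \simeq n/\kappa(n)$ under the $\simeq$-equivalence.
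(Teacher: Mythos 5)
Your argument is correct in substance, and for the upper bound it is exactly the route the paper intends: Corollary \ref{cor:folfuncsuperlin} (the $p=1$ equivalence with the inverse F\o lner function) combined with Nowak's Theorem 7.1, which for these groups makes balls into F\o lner sets with linear control, so that $\underline{F}_G(m)\gtrsim\kappa(m)$. For the lower bound the paper's (implicit) route is different from yours but parallel: it uses the \emph{other} direction of Corollary \ref{cor:folfuncsuperlin}, namely $D\Lambda^1_G(n)\gtrsim\inf\setcon{m/\underline{F}_G(m)}{m\geq n}$, together with the inequality $F_G\gtrsim V_G$, which is part of Nowak's statement and is itself the Coulhon--Saloff-Coste isoperimetric inequality. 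You instead route the lower bound through Theorem \ref{bthm:inftyleq1} ($D\Lambda^\infty_G\lesssim D\Lambda^1_G$) and Proposition \ref{prop:inftycase}; since Theorem \ref{bthm:inftyleq1} is itself a repackaging of Coulhon--Saloff-Coste, the two routes rest on the same analytic input, and yours has the mild advantage of not needing the lower-bound half of Nowak's equivalence.

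One justification is off, though the fact it supports is true and routine. To pass from $\inf\setcon{m/\kappa(m)}{m\geq n}$ to $n/\kappa(n)$ you need $\kappa(m)\lesssim\frac{m}{n}\kappa(n)$ for $m\geq n$, i.e.\ an \emph{upper} bound on $\kappa$, hence a \emph{lower} bound on the volume growth. Submultiplicativity $V(r+s)\leq V(r)V(s)$ bounds $V$ from above and so goes the wrong way, and the $1$-Lipschitz bound $\kappa(m)\leq\kappa(n)+(m-n)$ is far too weak when $\kappa(n)\ll n$. The correct tool is a packing estimate: placing $j$ disjoint balls of radius $k=\kappa(n)$ along a geodesic ray gives $V(3jk)\geq jV(k)>jn$, whence $\kappa(jn)\leq 3j\kappa(n)+1$ and the desired almost-monotonicity of $m\mapsto m/\kappa(m)$. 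With that substitution your proof is complete.
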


The key examples of groups satisfying the above hypotheses are polycylic groups and wreath products $F\wr\Z$ with $F$ finite. These groups also have \textbf{controlled F\o lner pairs} in the sense of \cite{Tess_asymiso}. For these groups we have the following:

\begin{bthm}\label{bcor:GcontFolpairs} Let $G$ be a finitely generated amenable group with controlled F\o lner pairs. Then, for all $p\in[1,\infty)$
\[
 D\Lambda^p_G(n) \simeq \frac{n}{\underline{\kappa}(n)}
\]
where $\kappa(n)$ is the inverse growth function $\kappa(n)=\min\setcon{k}{\abs{B(1,k)}> n}$.
\end{bthm}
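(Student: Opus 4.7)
The plan is to establish matching bounds $D\Lambda^p_G(n)\simeq n/\underline{\kappa}(n)$.

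\textbf{Lower bound.} Chain together Theorems \ref{bthm:pleqq} and \ref{bthm:inftyleq1}:
\[
 D\Lambda^\infty_G(n)\;\lesssim\; D\Lambda^1_G(n)\;\lesssim\; D\Lambda^p_G(n)
\]
for every $p\in[1,\infty)$. The earlier proposition identifies $D\Lambda^\infty_G(n)$ with $\min\setcon{m/\underline{\kappa}(m)}{m\geq n}$; this minimum is $\simeq n/\underline{\kappa}(n)$ because the bounded-degree bound $\abs{B(1,k+1)}\leq(d+1)\abs{B(1,k)}$ prevents $\underline{\kappa}$ from jumping by more than one at a time, so any drop in $m\mapsto m/\underline{\kappa}(m)$ across a jump is by a bounded multiplicative factor.

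\textbf{Upper bound.} For $n$ given, let $(F_k,F_k')$ be the controlled F\o lner pair of \cite{Tess_asymiso} at scale $k$, where $k$ will be chosen so that $\abs{F_k'}\simeq n$. Take $\Gamma=F_k'$ as a subgraph of the Cayley graph of $G$, and test with the truncated distance-to-boundary function
\[
 f(v)=\max\set{0,\,d_G(v,G\setminus F_k')-1}.
\]
By construction $f$ vanishes on $\partial_G F_k'$; the $1$-Lipschitz property of $d_G(\cdot,G\setminus F_k')$ gives $\abs{\nabla f}\leq 1$ pointwise; and the defining inequality $d(F_k,G\setminus F_k')\geq k$ forces $f\geq k-1$ on $F_k$. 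Combined with the volume ratio $\abs{F_k'}\leq C\abs{F_k}$, this yields
\[
 Dh^p(F_k')\;\leq\;\frac{\norm{\nabla f}_p}{\norm{f}_p}\;\leq\;\frac{\abs{F_k'}^{1/p}}{(k-1)\abs{F_k}^{1/p}}\;\lesssim\;\frac{1}{k},
\]
and hence $\abs{F_k'}\,Dh^p(F_k')\lesssim\abs{F_k'}/k$.

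\textbf{The obstacle.} The technical point, and in my view the main hurdle, is parameter matching: one must choose $k$ so that $\abs{F_k'}\in[n,C'n]$ and simultaneously $k\simeq\underline{\kappa}(n)$, which then makes $\abs{F_k'}/k\simeq n/\underline{\kappa}(n)$. For polycyclic groups and wreath products $F\wr\Z$, the standard constructions of controlled F\o lner pairs give $\abs{F_k}\simeq\abs{B(1,k)}$; together with the trivial lower bound $\abs{F_k'}\geq\abs{B(1,k)}$ (since $F_k'$ contains a $k$-ball around every point of $F_k$) and the bounded-degree upper bound $\abs{B(1,k+1)}\leq(d+1)\abs{B(1,k)}$, this ensures that consecutive values of $\abs{F_k'}$ differ by at most a constant factor, and the desired $k$ always exists. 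Plugging the resulting $F_k'$ into the infimum defining $D\Lambda^p_G(n)$ and using monotonicity in $n$ completes the upper bound, and hence the proof.
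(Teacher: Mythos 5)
Your overall architecture matches the paper's: the lower bound comes from chaining $D\Lambda^\infty_G\lesssim D\Lambda^1_G\lesssim D\Lambda^p_G$ with Proposition \ref{prop:inftycase}, and the upper bound from testing $Dh^p$ on the F\o lner pairs with a distance-type function (your choice $d_G(\cdot,G\setminus F_k')-1$ versus the paper's $\max\set{0,k-d(\cdot,F_k)}$ is immaterial). However, the step you correctly single out as the main hurdle --- parameter matching --- is where your argument has a genuine gap. You resolve it only by assuming $\abs{F_k}\simeq\abs{B(1,k)}$, which holds for the standard constructions in polycyclic groups and $F\wr\Z$ but is not part of the definition of controlled F\o lner pairs; the definition only yields $\abs{B(1,k)}\leq\abs{F_k'}\leq\abs{B(1,Ck)}$, and for exponential growth the ratio $\abs{F_{k+1}'}/\abs{F_k'}$ can then a priori be unbounded, so there need not exist any $k$ with $\abs{F_k'}\in[n,C'n]$. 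Since the theorem is stated for arbitrary groups with controlled F\o lner pairs, your upper bound as written only covers special cases.

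The fix, which is how the paper proceeds, is to drop the requirement $\abs{F_k'}\simeq n$ entirely: choose $k$ maximal with $\abs{F_k'}\leq n$, enlarge $F_k'$ to an arbitrary $n$-vertex subgraph $\Gamma$ containing it, and invoke Lemma \ref{lem:nesting} to get $Dh^p(\Gamma)\leq Dh^p(F_k')\lesssim 1/k$; then the chain $\abs{B(1,k)}\leq\abs{F_k'}\leq n<\abs{F_{k+1}'}\leq\abs{B(1,C(k+1))}$ (the last inequality from the diameter axiom) gives $k\leq\kappa(n)\leq C(k+1)$ directly from the definition, whence $D\Lambda^p_G(n)\lesssim n/k\simeq n/\kappa(n)$ with no extra hypotheses. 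A smaller point on the lower bound: your justification that $\min\setcon{m/\underline{\kappa}(m)}{m\geq n}\simeq n/\underline{\kappa}(n)$ only controls the drop across a single jump of $\underline{\kappa}$, and such drops could in principle compound over the whole range $m\geq n$; the honest argument is that $\abs{B(1,k)}/k$ is essentially non-decreasing in $k$, which is clear for the polynomial or exponential growth exhibited by the groups under consideration.
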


\subsection{Questions}

It is natural to ask for which groups $\Lambda^p_G\simeq D\Lambda^p_G$. For non-amenable groups this will be exceptionally rare, as $\Lambda^p_G(n)\simeq n$ if and only if $G$ contains an expander $(\Gamma_n)_n$ where the $\abs{\Gamma_n}$ grow at most exponentially in $n$. 

Amongst amenable groups this equality appears to be much more common. We know from Theorem \ref{bcor:GcontFolpairs}, \cite{HumeMackTess1} and \cite{HumeMackTess2} that these profiles are equal for all virtually polycylic groups.

However, it is certainly not the case that $\Lambda^1_{G}\simeq D\Lambda^1_G$ holds for amenable groups. By \cite{HumeMackPoorly} there are elementary amenable groups $G_{d}$ such that
\[
 \Lambda^1_{G_{d}}(n) \lesssim \log^d(n) \lesssim \frac{n}{\log(n)} \lesssim D\Lambda^1_{G}(n).
\]
 where $\log^d$ denotes the $d$th iterate of $\log$. Both of the following questions appear to be open.
 
 \begin{question} Does $\Lambda^p_G(n) \lesssim D\Lambda^p_G(n)$ hold for every $p\in[1,\infty]$ and every finitely generated group $G$? 
 \end{question}
 
 \begin{question} Does there exist a finitely generated group $G$ and $1\leq p <q <\infty$ such that $D\Lambda^p_G(n)\not\simeq D\Lambda^q_G(n)$?
 \end{question}
 
%
 
 \subsection{Disclaimer} Some results in this note are likely to be well-known to experts in the area. The goal here is to represent them as profiles in the style of \cite{HumeMackTess1}, and hopefully to provoke new questions.
 
\section{The short answer}

We recall that
\[
 D^*\Lambda^p_X(n):=\sup\colsetcon{\abs{\Gamma}Dh^p_X(\Gamma)}{\Gamma\leq X,\ \partial_X\Gamma\neq  V\Gamma,\ \abs{V\Gamma}\leq n}.
\]

\begin{proposition}\label{prop:detectinf} Let $X$ be an infinite graph of maximal degree $d$. Then, for all $n\geq d+1$,
\[
 D^*\Lambda^p_X(n) =
 \left\{
  \begin{array}{rl}
   (d+1)^\frac1p n & \textup{if } p<\infty, \\
   n & \textup{if } p=\infty.
  \end{array}
 \right.
\]
\end{proposition}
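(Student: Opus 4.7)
The plan is to establish the upper and lower bounds separately; both directions revolve around a single test function, namely the indicator of an interior vertex.

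\textbf{Upper bound.} Let $\Gamma\leq X$ be admissible, so that $\partial_X\Gamma\neq V\Gamma$ and hence some interior vertex $v_0$ exists. I would take $f$ equal to $1$ at $v_0$ and $0$ elsewhere: this vanishes on $\partial_X\Gamma$ and is non-zero. Since $\nabla f$ is supported on $\{v_0\}\cup N_\Gamma(v_0)$ and takes the value $1$ throughout that set, and since $v_0$ has at most $d$ neighbours in $\Gamma$, one gets $\norm{\nabla f}_p^p\leq d+1$ for $p<\infty$ and $\norm{\nabla f}_\infty=1$. Hence $Dh^p_X(\Gamma)\leq (d+1)^{1/p}$ (respectively $\leq 1$ when $p=\infty$), and multiplying by $|V\Gamma|\leq n$ yields the upper bound in both cases.

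\textbf{Lower bound.} Here I would construct a saturating $\Gamma$ of size exactly $n$ by a disjoint-union trick. Choose $v_0\in VX$ of degree $d$ and let $\Gamma_0$ be the induced subgraph on $\{v_0\}\cup N_X(v_0)$; the goal is to arrange for $v_0$ to be the unique interior vertex. If this holds, then any admissible function must be a scalar multiple of the indicator of $v_0$, and the upper-bound calculation becomes an equality: $Dh^p_X(\Gamma_0)=(d+1)^{1/p}$ for $p<\infty$ and $1$ for $p=\infty$. To grow to $n$ vertices I would adjoin $n-(d+1)$ further vertices, each picked far from $\Gamma_0$ and far from one another, so that they are isolated in $\Gamma$ and each lies in $\partial_X\Gamma$. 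Under the convention that $\nabla f(v)=0$ at a vertex with no incident edge in $\Gamma$, the padding contributes zero to both $\norm{\nabla f}_p$ and $\norm{f}_p$; consequently $Dh^p_X(\Gamma)=Dh^p_X(\Gamma_0)$ while $|V\Gamma|=n$.

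\textbf{Main obstacle.} The technical heart is the lower bound: one needs each $w\in N_X(v_0)$ to have some $X$-neighbour outside $\{v_0\}\cup N_X(v_0)$, so that $\partial_X\Gamma_0 = N_X(v_0)$ and $v_0$ is the only interior vertex. In a locally tree-like region this is automatic, but if every maximum-degree vertex has its $2$-neighbourhood trapped inside its closed $1$-neighbourhood (for instance inside a $K_{d+1}$ subgraph), one must choose $v_0$ more carefully or replace $\Gamma_0$ by a slightly enlarged configuration in which the extremal behaviour of the Rayleigh quotient is still forced; verifying that this can always be done in an infinite graph of maximum degree $d$ is where the work lies. The $p=\infty$ endpoint is comparatively effortless, since the indicator test function has $\norm{f}_\infty=\norm{\nabla f}_\infty=1$ independently of the degree of $v_0$.
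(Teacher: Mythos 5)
Your strategy coincides with the paper's. The upper bound is identical: for any admissible $\Gamma$ pick an interior vertex $v_0$, test with $1_{\set{v_0}}$, and get $Dh^p_X(\Gamma)\leq(d+1)^{1/p}$ (resp.\ $\leq 1$ for $p=\infty$). For the lower bound the paper writes ``let $v$ have degree exactly $d$ and let $\Gamma_n\leq X$ satisfy $\abs{\Gamma}=n$ and $\partial_X\Gamma=\set{v}$'' --- which, for the ensuing computation $\norm{f}_p=\abs{f(v)}$, $\norm{\nabla f}_p=(d+1)^{1/p}\abs{f(v)}$ to make sense, must be read as: $v$ is the \emph{unique} vertex of $\Gamma_n$ not in $\partial_X\Gamma_n$. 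No construction of such a $\Gamma_n$ is offered. Your closed-neighbourhood-plus-scattered-padding recipe is exactly how one would try to produce it, and the padding step is unproblematic in an infinite connected bounded-degree graph. So, modulo the existence question you raise at the end, your proof and the paper's are the same argument.

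The obstacle you flag, however, is not a formality that ``further work'' will dispose of: the required configuration need not exist, and the stated equality can then fail. Let $X$ be the infinite ray $x_0x_1x_2\cdots$ with a triangle glued at its end, i.e.\ two further vertices $a,b$ and edges $x_0a$, $x_0b$, $ab$. Then $d=3$ and $x_0$ is the only vertex of degree $3$; since all $X$-neighbours of $a$ and of $b$ lie in $\set{x_0,a,b}$, any subgraph in which $x_0$ is interior also has $a$ and $b$ interior, and the test function $1_{\set{a,b}}$ gives $Dh^p_X(\Gamma)\leq(3/2)^{1/p}$ there, while subgraphs whose interior vertices all have degree $\leq 2$ satisfy $Dh^p_X(\Gamma)\leq 3^{1/p}$. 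Hence $D^*\Lambda^p_X(n)=3^{1/p}n<(d+1)^{1/p}n$ for this connected infinite $X$. (One can even arrange this in a Cayley graph: if some $s\in S$ satisfies $s(S\cup\set{e})=S\cup\set{e}$, then $vs$ is interior whenever $v$ is.) So your proof is exactly as complete as the paper's --- you have merely made explicit the one step the paper takes for granted --- and that step is a genuine issue: as stated, the proposition needs either an additional hypothesis guaranteeing a degree-$d$ vertex all of whose neighbours can be pushed into the boundary, or the conclusion should be weakened to $D^*\Lambda^p_X(n)\simeq n$, which is all that the ``short answer'' requires.
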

\begin{proof}
Let $v\in VX$ have degree exactly $d$ and let $\Gamma_n\leq X$ satisfy $\abs{\Gamma}=n$ and $\partial_X\Gamma=\set{v}$. Let $f:\Gamma_n\to \R$ be a function such that $f|_{\partial_X \Gamma_n}\equiv 0$. Firstly suppose $p<\infty$.
\begin{equation}\label{eq:normp}
\norm{f}_p = \abs{f(v)} \quad \textup{and} \quad\norm{\nabla f}_p = (d+1)^{\frac{1}{p}} \abs{f(v)}.
\end{equation}
Hence $Dh^p(\Gamma_n)\geq (d+1)^{\frac{1}{p}}$. Next we prove an upper bound on $Dh^p_X(\Gamma)$ for any finite $\Gamma\leq X$ with $V\Gamma\setminus \partial_X\Gamma\neq\emptyset$. Fix a vertex $v\in  V\Gamma\setminus \partial_X\Gamma$ and define $f=1_{\set{v}}$. Then 
\begin{equation}\label{eq:Dhp}
 Dh^p_X(\Gamma)\leq \frac{\norm{\nabla f}_p}{\norm{f}_p} \leq (d+1)^{\frac{1}{p}}\leq d+1.
\end{equation}

If $p=\infty$, then following the above strategy (\ref{eq:normp}) can be replaced by $\norm{f}_\infty = \abs{f(v)} = \norm{\nabla f}_\infty$. Hence $Dh^\infty_X(\Gamma_n) \geq 1$.  Moreover, taking the same function $f=1_{\set{v}}$ (\ref{eq:Dhp}) can be replaced with $Dh^\infty_X(\Gamma) \leq 1$ and thus the result holds when $p=\infty$.
\end{proof}

\section{The long answer}

We now pass to the definition of Dirichlet-Poincar\'e profile we will consider for the remainder of the paper.

\begin{defn}\label{def:DPprofile} Let $X$ be a graph of bounded degree. The $L^p$-\textbf{Dirichlet-Poincar\'e profile} of $X$ is given by
\[
 D\Lambda^p_X(n) = \inf\colsetcon{\abs{\Gamma}Dh^p_X(\Gamma)}{\Gamma\leq X,\ \abs{V\Gamma}\geq n}.
\]
\end{defn}

\subsection{Elementary observations}

We begin with two elementary but useful observations.

\begin{lemma}\label{lem:nesting} Let $X$ be a graph, let $B$ be a finite subgraph of $X$ and $A$ a subgraph of $B$. Then for all $p$, $Dh^p(B)\leq Dh^p(A)$.
\end{lemma}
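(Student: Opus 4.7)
The natural plan is to extend any admissible test function on $A$ to one on $B$ by zero, and check that all $L^p$-norms are preserved so that the infimum defining $Dh^p(B)$ is bounded above by that defining $Dh^p(A)$. Concretely, given a non-negative $f:VA\to\R$ with $f|_{\partial_X A}\equiv 0$ and $f\not\equiv 0$, I would set $\tilde f(v)=f(v)$ for $v\in VA$ and $\tilde f(v)=0$ otherwise in $VB$.

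The first thing to verify is admissibility of $\tilde f$, namely $\tilde f|_{\partial_X B}\equiv 0$. This is quick: a vertex $v\in\partial_X B\cap VA$ has an $X$-neighbour outside $VB$, hence outside $VA$, so $v\in\partial_X A$ and $f(v)=0$; vertices of $\partial_X B\setminus VA$ are killed by definition. The slightly more involved step is to compare gradients. The key observation, applied in turn to interior vertices of $A$, to boundary vertices of $A$, and to vertices of $VB\setminus VA$, is that whenever an edge of $B$ joins $VA$ to its complement in $VB$, the $VA$-endpoint automatically lies in $\partial_X A$, so both endpoints carry value $0$ under $\tilde f$. Thus every ``new'' edge contributes nothing to $\nabla\tilde f$, so $\nabla\tilde f$ is supported on $VA$ and agrees with $\nabla f$ there; this yields $\norm{\nabla\tilde f}_p=\norm{\nabla f}_p$ and $\norm{\tilde f}_p=\norm{f}_p$ simultaneously for all $p\in[1,\infty]$.

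Taking the infimum over $f$ gives $Dh^p(B)\leq Dh^p(A)$. The degenerate conventions are harmless: if $\partial_X A=VA$ the right-hand side is $+\infty$, and if $\partial_X B=VB$ then the same adjacency argument used for admissibility already forces $\partial_X A=VA$. The main obstacle, and really the only non-mechanical point, is the bookkeeping that guarantees the extension does not create any new contribution to the gradient, which reduces in the end to the single observation that an $A$-vertex visible from outside $VA$ must lie in $\partial_X A$.
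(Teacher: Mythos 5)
Your proof is correct and is essentially the paper's own argument: extend an admissible test function on $A$ by zero to all of $B$ and observe that the $L^p$-norms of the function and of its gradient are unchanged. You in fact check more carefully than the paper does that the extension vanishes on $\partial_X B$ and that edges leaving $VA$ contribute nothing to the gradient, since their $VA$-endpoints lie in $\partial_X A$.
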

\begin{proof} For each $f:VA\to[0,\infty)$ satisfying $f|_{\partial_X{A}}\equiv 0$, define $f':VB\to[0,\infty)$ by 
\[
f'(b)= \left\{\begin{array}{rl} f(b) & \textrm{if}\ b\in VA, \\ 0 & \textrm{otherwise}.\end{array}\right.
\] 
It is clear that for every $p$, $\norm{f'}_p=\norm{f}_p$ and $\norm{\nabla f'}_p=\norm{\nabla f}_p$. The result follows.
\end{proof}

Given a finite subgraph $\Gamma\leq X$ and some $a\geq 1$, we define 
\[
Dh^p_a(\Gamma)=\inf\setcon{\frac{\norm{\nabla_a f}}{\norm{f}}}{f|_{\partial_X\Gamma} \equiv 0},
\]
where $\nabla_a f(x)=\sup\setcon{\abs{f(y)-f(y')}}{y,y'\in B_{\Gamma}(x,a)}$.

\begin{lemma}\label{lem:changethickness} Let $\Gamma$ be a finite graph with maximal degree $d$, let $a\geq 1$. There exists a constant $c=c(a,d)>0$ such that
\begin{equation}\label{eq:changethick}
 cDh^p_a(\Gamma)\leq Dh^p(\Gamma) \leq Dh^p_a(\Gamma).
\end{equation}
\end{lemma}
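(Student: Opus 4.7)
The plan is to prove the two inequalities separately, with the right-hand one being essentially trivial and the left-hand one following from a path-length estimate combined with a swap of summation.

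For the upper bound $Dh^p(\Gamma)\leq Dh^p_a(\Gamma)$, I would observe that for $a\geq 1$ the closed ball $B_\Gamma(x,a)$ contains $x$ together with all its neighbours, so for every edge $xw\in E\Gamma$ the pair $(x,w)$ contributes $\abs{f(x)-f(w)}$ to the supremum defining $\nabla_a f(x)$. Hence $\nabla f(x)\leq \nabla_a f(x)$ pointwise, which gives $\norm{\nabla f}_p\leq \norm{\nabla_a f}_p$ for every admissible $f$, and the inequality passes to the infimum.

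For the lower bound $cDh^p_a(\Gamma)\leq Dh^p(\Gamma)$, the key input is that any two vertices $y,y'\in B_\Gamma(x,a)$ are joined by a path of length at most $2a$ in $\Gamma$. Telescoping along that path gives
\[
\abs{f(y)-f(y')}\leq 2a\max_{z\in B_\Gamma(x,a)}\nabla f(z),
\]
so $\nabla_a f(x)\leq 2a\max_{z\in B_\Gamma(x,a)}\nabla f(z)$. For $p<\infty$ I would then bound the maximum by the $\ell^p$-sum and raise everything to the $p$-th power to get
\[
\norm{\nabla_a f}_p^p \leq (2a)^p\sum_{x\in V\Gamma}\sum_{z\in B_\Gamma(x,a)}\nabla f(z)^p.
\]
Swapping the order of summation and using that the maximum vertex degree is $d$, so $\abs{B_\Gamma(z,a)}\leq (d+1)^a$, yields $\norm{\nabla_a f}_p\leq 2a(d+1)^{a/p}\norm{\nabla f}_p$. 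Dividing by $\norm{f}_p$ and taking infima gives the claim with $c=(2a(d+1)^{a/p})^{-1}$. The case $p=\infty$ is handled analogously: the pointwise inequality $\nabla_a f(x)\leq 2a\max_{z\in B_\Gamma(x,a)}\nabla f(z)\leq 2a\norm{\nabla f}_\infty$ directly gives $\norm{\nabla_a f}_\infty\leq 2a\norm{\nabla f}_\infty$, so $c=(2a)^{-1}$ works.

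There is no real obstacle here; the only point that wants a moment's care is the verification that the closed ball in the definition of $\nabla_a f$ contains $x$ itself (so that the edge inequalities are recovered when $a=1$), and the uniform volume estimate $\abs{B_\Gamma(z,a)}\leq (d+1)^a$ coming from the bounded-degree hypothesis. Both are elementary, and the constant $c$ depends only on $a$ and $d$ as required.
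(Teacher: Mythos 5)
Your proof is correct and rests on the same key step as the paper's: concatenating two geodesics of length at most $a$ inside $B_\Gamma(x,a)$ and applying the triangle inequality, so that a large value of $\nabla_a f(x)$ forces some vertex of $B_\Gamma(x,a)$ to have $\nabla f$ at least a $\frac{1}{2a}$-fraction as large. The only difference is bookkeeping: the paper compares the cardinalities of the superlevel sets $\{\nabla_a f\geq t\}$ and $\{2a\nabla f\geq t\}$ and integrates via the co-area formula, whereas you use the pointwise bound $\nabla_a f(x)\leq 2a\max_{z\in B_\Gamma(x,a)}\nabla f(z)$ and swap the order of summation; the two are equivalent, and your version has the minor advantage of treating $p=\infty$ directly, which the co-area argument does not explicitly cover.
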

\begin{proof} The right-hand inequality is obvious. Fix $a\geq 1$ and let $B_a$ be the maximal cardinality of a closed ball of radius $a$ in $\Gamma$. For every $x\in \set{\nabla_a f\geq t}$ choose $y,y'\in B_\Gamma(x,a)$ such that $\abs{f(y)-f(y')}\geq t$. Consider geodesics from $y$ to $x$ and from $x$ to $y'$. By the triangle inequality there is an edge $uv$ on one of these geodesics such that $\abs{f(u)-f(v)}\geq \frac{t}{2a}$. It follows that there is some $u\in B_\Gamma(x,a)$ contained in $\set{\nabla f\geq \frac{t}{2a}}$. Hence 
\begin{equation}\label{eq:nabla}
\abs{\set{\nabla_a f\geq t}} \leq B_a\abs{\set{\nabla f\geq \frac{t}{2a}}} \leq d^{a+1}\abs{\set{2a\nabla f\geq t}}.
\end{equation}
Using the co-area formula:
\[
 \sum_{x\in V\Gamma}\abs{g(x)} = \int_{\R^+} \set{g\geq t} dt
\]
and $(\ref{eq:nabla})$ we have
\[
 \sum_{x\in V\Gamma}\abs{\nabla_a f(x)}^p \leq d^{a+1}\sum{\abs{2a\nabla f(x)}^p}\leq (2a)^pd^{a+1}\sum_{x\in V\Gamma}\abs{\nabla f(x)}^p.
\]
Hence $\norm{\nabla_a f}_p \leq 2ad^{\frac{a+1}{p}}\norm{\nabla f}_p$, and $(\ref{eq:changethick})$ follows.
\end{proof}

\subsection{Quasi-isometry invariance}
These Dirichlet-Poincar\'e profiles are quasi--isometry invariants.

\begin{theorem}\label{thm:qiinv} Let $X,Y$ be infinite connected bounded degree graphs and let $q:X\to Y$ be a quasi--isometry. Then for any $p\in[1,\infty)$, $D\Lambda^p_X(n)\simeq D\Lambda^p_Y(n)$.
\end{theorem}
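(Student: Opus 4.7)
The argument mirrors the quasi-isometry invariance proof for Neumann--Poincar\'e profiles in \cite{HumeMackTess1}: push a near-optimal Dirichlet function forward along the quasi-isometry, using Lemma \ref{lem:changethickness} to absorb the coarse scale distortion. By symmetry of $\simeq$ it suffices to show $D\Lambda^p_Y(n)\lesssim D\Lambda^p_X(n)$. Fix an $(L,A)$-quasi-isometry $q:X\to Y$, set $a=L+A$, and note that the fibres of $q$ have cardinality bounded by some $M=M(L,A,d)$, since two preimages of a common point lie within $X$-distance $LA$.

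Given $\varepsilon>0$, choose $\Gamma'\leq X$ with $\abs{V\Gamma'}\geq n$ and a non-negative $f:V\Gamma'\to\R$ vanishing on $\partial_X\Gamma'$ which together realise $D\Lambda^p_X(n)$ up to a factor $1+\varepsilon$. Let $\Gamma\leq Y$ be the subgraph induced by $q(V\Gamma')\cup N_Y(q(V\Gamma'))$ and define $g:V\Gamma\to[0,\infty)$ by $g(y)=\max\setcon{f(x)}{x\in q^{-1}(y)\cap V\Gamma'}$ for $y\in q(V\Gamma')$ and $g(y)=0$ otherwise. Every boundary vertex of $\Gamma$ has a $Y$-neighbour outside $V\Gamma$, so lies in $N_Y(q(V\Gamma'))\setminus q(V\Gamma')$, forcing $g|_{\partial_Y\Gamma}\equiv 0$. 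The bounded-fibre bound yields $\abs{V\Gamma}\geq\abs{V\Gamma'}/M\geq n/M$ and $\norm{f}_p^p/M\leq\norm{g}_p^p\leq\norm{f}_p^p$.

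For the gradient, let $yy'$ be an edge of $\Gamma$ with $g(y)\geq g(y')$. If $g(y)=0$ there is nothing to do, so suppose $g(y)=f(x)>0$ for some $x\in q^{-1}(y)\cap V\Gamma'$; in particular $x\notin\partial_X\Gamma'$. Pick $x'\in q^{-1}(y')$ (which is non-empty after the usual reduction to vertex-surjective $q$), so that $d_X(x,x')\leq a$. Either an $X$-geodesic from $x$ to $x'$ lies entirely in $V\Gamma'$, in which case $x'\in V\Gamma'$, $g(y')\geq f(x')$, and $\abs{g(y)-g(y')}\leq\abs{f(x)-f(x')}\leq\nabla_a f(x)$; or such a geodesic first exits $\Gamma'$ at a vertex $x_0\in\partial_X\Gamma'$ with $d_{\Gamma'}(x,x_0)\leq a$, and the vanishing of $f$ on $\partial_X\Gamma'$ yields $\abs{g(y)-g(y')}\leq g(y)=\abs{f(x)-f(x_0)}\leq\nabla_a f(x)$. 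Since each $x\in V\Gamma'$ participates in at most $\deg_Y(q(x))$ such edges, summing gives $\norm{\nabla g}_p\lesssim\norm{\nabla_a f}_p$, and Lemma \ref{lem:changethickness} then bounds this by $\norm{\nabla f}_p$ up to a multiplicative constant. Combining, $\abs{V\Gamma}Dh^p_Y(\Gamma)\lesssim\abs{V\Gamma'}Dh^p_X(\Gamma')\leq(1+\varepsilon)D\Lambda^p_X(n)$, and the bound $\abs{V\Gamma}\geq n/M$ lets us conclude $D\Lambda^p_Y(n/M)\lesssim D\Lambda^p_X(n)$, equivalent under $\simeq$ to the desired inequality.

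The principal obstacle is controlling the gradient across edges of $Y$ whose preimages in $X$ straddle $\partial_X\Gamma'$: naively the bound involves $f(x)$, which could be arbitrarily large. Lemma \ref{lem:changethickness} is the decisive tool that rescues the estimate, by converting the vanishing of $f$ on $\partial_X\Gamma'$ into a pointwise inequality $f(x)\leq\nabla_a f(x)$ for every $x\in V\Gamma'$ whose $\Gamma'$-ball of radius $a$ meets the boundary, and then trading the thick gradient for the ordinary gradient at the price of a constant depending only on $a$ and $d$.
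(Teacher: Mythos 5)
Your proposal is correct and follows essentially the same route as the paper: push a near-optimal Dirichlet function forward along $q$ by taking maxima over (approximate) fibres, enlarge the image subgraph by a bounded neighbourhood so that the pushed-forward function automatically vanishes on the $Y$-boundary, bound the norm from below via bounded fibres and the gradient from above via the coarse gradient $\nabla_a$ and Lemma \ref{lem:changethickness}, and obtain the reverse inequality from a quasi-inverse. The only discrepancies are cosmetic (your scale should be $a\geq L(1+A)$ rather than $L+A$, and the pointwise bound $f(x)\le \nabla_a f(x)$ near $\partial_X\Gamma'$ comes from the definition of $\nabla_a$ rather than from Lemma \ref{lem:changethickness}, which is only needed for the norm comparison $\norm{\nabla_a f}_p\lesssim\norm{\nabla f}_p$).
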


\begin{proof}[Proof of Theorem $\ref{thm:qiinv}$] Let $q:X\to Y$ be a $(K,C)$-quasi-isometry, so for all $x,x'\in VX$,
\[
 K^{-1} d_X(x,x')-C \leq d_Y(q(x),q(x'))\leq Kd_X(x,x') +C,
\]
and for every $y\in VY$ there is some $x$ so that $d_Y(q(x),y)\leq C$. Let $d,d'$ be the maximal vertex degrees of $X,Y$ respectively. We will prove $D\Lambda^p_Y(n)\lesssim D\Lambda^p_X(n)$ by constructing for each finite $\Gamma\leq X$ a graph $\Gamma'\leq Y$ with a comparable number of vertices and $Dh^p_Y(\Gamma')$ bounded from above by a fixed multiple of $Dh^p_X(\Gamma)$.

For each finite $\Gamma\leq X$ we define $\Gamma'$ to be the full subgraph of $Y$ whose vertex set is the closed $C$-neighbourhood of $q(\Gamma)$. Given any $f:V\Gamma\to [0,\infty)$ with $f|_{\partial_X\Gamma}\equiv 0$ we define a comparison function $f':V\Gamma'\to[0,\infty)$. If $y\in \partial_Y \Gamma'$ define $f'(y)=0$, otherwise define $f'(y)=\max\setcon{\abs{f(x)}}{d_Y(y,q(x))\leq C}$.

We first find a lower bound for $\norm{f'}_p$ in terms of $\norm{f}_p$. For every $x\in V\Gamma$, $f'(q(x))\geq f(x)$, and the pre-image of a vertex in $Y$ has diameter at most $KC$. Hence
\begin{equation}\label{eq:controlnorm}
 \norm{f'}^p_p=\sum_{y\in V\Gamma'} f'(y)^p \geq (d+1)^{-KC} \sum_{x\in V\Gamma} f(x)^p = (d+1)^{-KC}\norm{f}^p_p.
\end{equation}
We next find an upper bound for $\norm{\nabla f'}_p$ in terms of $\norm{\nabla f}_p$. For $a\geq 1$ define $\Gamma_a$ to be the full subgraph of $X$ whose vertex set is the closed $a$--neighbourhood of $V\Gamma$ in $X$. Define $f_a:V\Gamma_a\to[0,\infty)$ by $f_a(x)=f(x)$ if $x\in V\Gamma$ and $0$ otherwise. By direct calculation, $\norm{\nabla f_a}_p=\norm{\nabla f}_p$, so by Lemma \ref{lem:changethickness}, for any $a$ there is a constant $L=L(a,d)>0$ such that
\[
 \norm{\nabla_af_a}_p \leq L\norm{\nabla f}_p.
\]
let $y_0\in V\Gamma'$ and choose $y_1$ so that $y_0y_1\in E\Gamma'$ and $\nabla f'(y_0)=\abs{f'(y_0)-f'(y_1)}$. Now we may choose $x_0,x_1\in V\Gamma$ such that $f'(y_j)=f(x_j)$ and $d_Y(q(x_j),y_j)\leq C$. We have $d_Y(q(x_0),q(x_1))\leq 2C+1$ so
\[
 d_X(x_0,x_1)\leq Kd_Y(q(x_0),q(x_1))+KC = 3KC + K.
\]
Choosing $a\geq 3KC+K$ we see that
\[
 \nabla_a f_a(x_0) \geq \abs{f_a(x_0)-f_a(x_1)}=\abs{f'(y_0)-f'(y_1)}=\nabla f'(y_0).
\]
Now the set of $y_0\in V\Gamma'$ for which we may choose a fixed $x_0\in V\Gamma$ is contained in the closed ball of radius $C$ around $q(x_0)$. Thus
\[
 \norm{\nabla f'}_p \leq (d'+1)^C \norm{\nabla_a f_a}_p \leq L(d'+1)^C \norm{\nabla f}_p.
\]
Combining this with ($\ref{eq:controlnorm}$), we have $Dh^p(\Gamma')\leq (d+1)^{KC}L(d'+1)^C Dh^p(\Gamma)$. If $\abs{\Gamma}\geq n$, then $\abs{\Gamma'}\geq (d+1)^{-KC}n$, hence
\[
 D\Lambda^p_Y(n) \leq (d+1)^{KC}L(d'+1)^CD\Lambda^p_X((d+1)^{KC}n),
\]
so $D\Lambda^p_Y(n)\lesssim_{K,C,d,d'} D\Lambda^p_X(n)$. The opposite inequality is obtained by considering a quasi-inverse $r:Y\to X$ of $q$.
\end{proof}

\subsection{The extremal cases $p=1$ and $p=\infty$}

We next explore the extremal cases, starting with $p=\infty$.

\begin{proposition}\label{prop:inftycase} Let $X$ be an infinite graph and let $\Gamma$ be a subgraph of $X$. Define $l_\Gamma$ to be the radius of the largest ball in $X$ which is contained in $\Gamma$. Then $Dh^\infty(\Gamma) =l_\Gamma^{-1}$ and
\[
 D\Lambda^\infty_X(n) \simeq \inf\setcon{\frac{m}{\underline{\kappa}(m)}}{m\geq n}
\]
where $\underline{\kappa}(m)$ is the maximal $k$ such that there is a ball of radius $k$ in $X$ containing at most $m$ vertices.
\end{proposition}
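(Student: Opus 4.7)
My plan is to first establish the pointwise formula $Dh^\infty(\Gamma)=l_\Gamma^{-1}$ (the upper bound via a tent function centred at a ball of maximal radius, the lower bound via a short path from the maximum of $f$ to the boundary), then deduce the profile identity from it.

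For $Dh^\infty(\Gamma)\leq l_\Gamma^{-1}$, I would pick $x_0\in VX$ with $B(x_0,l_\Gamma)\subseteq V\Gamma$ and try $f(y)=\max\set{0,\,l_\Gamma-d_X(x_0,y)}$. This $f$ is $1$-Lipschitz with respect to $d_X$, so $\norm{\nabla f}_\infty\leq 1$, while $\norm{f}_\infty=f(x_0)=l_\Gamma$. It also vanishes on $\partial_X\Gamma$: a hypothetical $v\in\partial_X\Gamma$ with $d_X(x_0,v)<l_\Gamma$ would have an $X$-neighbour outside $V\Gamma$ still lying in $B(x_0,l_\Gamma)\subseteq V\Gamma$, a contradiction. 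For the matching lower bound I take any admissible $f\not\equiv 0$ with $M=\norm{f}_\infty$ attained at some $x^*\in V\Gamma\setminus\partial_X\Gamma$, and let $r$ be maximal with $B(x^*,r)\subseteq V\Gamma$; then $1\leq r\leq l_\Gamma$. A length-$(r+1)$ geodesic in $X$ from $x^*$ to a point outside $V\Gamma$ has its penultimate vertex in $\partial_X\Gamma$, producing a path of length $r$ inside $\Gamma$ from $x^*$ to a zero of $f$; telescoping along it gives $M\leq r\norm{\nabla f}_\infty\leq l_\Gamma\norm{\nabla f}_\infty$, as required.

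With $\abs{\Gamma}Dh^\infty(\Gamma)=\abs{V\Gamma}/l_\Gamma$ in hand, the bound $D\Lambda^\infty_X(n)\gtrsim\inf\setcon{m/\underline{\kappa}(m)}{m\geq n}$ is immediate: for any $\Gamma$ with $\abs{V\Gamma}\geq n$, setting $m=\abs{V\Gamma}$, the ball of radius $l_\Gamma$ contained in $\Gamma$ witnesses $\underline{\kappa}(m)\geq l_\Gamma$, whence $\abs{V\Gamma}/l_\Gamma\geq m/\underline{\kappa}(m)$. For the reverse inequality, given $m\geq n$ I pick $x$ with $\abs{B(x,\underline{\kappa}(m))}\leq m$ and take $\Gamma=B(x,\underline{\kappa}(m))$ if this ball already has at least $n$ vertices; otherwise I take $\Gamma=B(x,\underline{\kappa}(m)+1)$, which has more than $m$ vertices by the definition of $\underline{\kappa}(m)$ and at most $(1+\Delta)m$ vertices via the standard bounded-degree expansion $\abs{B(x,j+1)}\leq(1+\Delta)\abs{B(x,j)}$, where $\Delta$ bounds the degree of $X$. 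Either way $\abs{V\Gamma}\geq n$ and $\abs{V\Gamma}/l_\Gamma\lesssim m/\underline{\kappa}(m)$.

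The only mildly subtle step is this last case split: because $\underline{\kappa}(m)$ is specified by an inequality rather than an equality, the canonical ball $B(x,\underline{\kappa}(m))$ need not already reach $n$ vertices, and one must permit a one-step expansion and rely on bounded-degree ball growth to absorb the resulting multiplicative factor.
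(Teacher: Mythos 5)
Your proposal is correct and follows essentially the same route as the paper: the tent function $f(y)=\max\set{0,l_\Gamma-d_X(x_0,y)}$ for the upper bound on $Dh^\infty(\Gamma)$, a telescoping estimate along a path from the maximum of $f$ to $\partial_X\Gamma$ for the lower bound, and then translation of $\abs{V\Gamma}/l_\Gamma$ into $m/\underline{\kappa}(m)$. Your case split at the end (expanding to $B(x,\underline{\kappa}(m)+1)$ and absorbing the bounded-degree factor) just makes explicit a step the paper passes over with a bare ``$\simeq$''.
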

\begin{proof}
Let $\Gamma$ be a finite subgraph of $X$ with $V\Gamma\neq\partial_X\Gamma$ and let $f:V\Gamma\to[0,\infty)$ satisfy $f|_{\partial_X \Gamma}\equiv 0$. Pick $x\in V\Gamma$ such that $f(x)=\norm{f}_\infty$. Let $P$ be a path from $x$ to a vertex in $\partial_X\Gamma$ which has length at most $l_\Gamma\geq 1$. It follows from the triangle inequality that $\norm{\nabla f}_\infty\geq \frac{1}{l_\Gamma}\norm{f}_\infty$, hence
\[
 Dh^\infty(\Gamma)\geq \frac{1}{l_\Gamma}.
\]
Now fix $x$ so that $B_X(x,l_\Gamma)\subseteq \Gamma$ and define $f:V\Gamma\to[0,\infty)$ by $f(y)=\max\set{0, l_\Gamma - d_X(x,y)}$. It is clear that $\norm{f}_\infty = l_\Gamma$ and $\norm{\nabla f}_\infty\leq 1$. Hence $Dh^\infty(\Gamma)\leq \frac{1}{l_\Gamma}$. By definition
\[
 D\Lambda^\infty_X(n)\simeq\inf\setcon{\frac{\abs{\Gamma}}{l_\Gamma}}{\Gamma\leq X,\ l_\Gamma\geq 1,\ \abs{\Gamma}\geq n} \simeq \inf\setcon{\frac{m}{\underline{\kappa}(m)}}{m\geq n}. \qedhere
\]
\end{proof}

Recall that $\Lambda^\infty_X(n) \simeq \sup\setcon{\frac{m}{\overline{\kappa}(m)}}{m\leq n}$ where $\overline{\kappa}$ is the (upper) inverse growth function $\overline{\kappa}(m)=\max{\setcon{k}{\forall x\ \abs{B(x;k)}\leq m}}$. For most groups (certainly those with polynomial or exponential growth), $D\Lambda^\infty_X(n)\simeq \Lambda^\infty_X(n)$.
\medskip

As with Poincar\'e profiles of Neumann type, the $L^1$-Dirichlet-Poincar\'e profile is determined by a combinatorial connectivity constant. We recall that the \textbf{Cheeger constant} of an infinite graph with bounded degree is given by
\[
 h(X) = \inf\setcon{\frac{\abs{\partial_X A}}{\abs{A}}}{A\subset VX,\ \abs{A}<\infty}
\]

\begin{theorem}\label{thm:DPprofamen} Let $X$ be a connected graph of bounded degree. The following are equivalent
\begin{enumerate}
\item $D\Lambda^p_X(n)\not\simeq n$ for every $p\in[1,\infty)$,
\item $D\Lambda^1_X(n)\not \simeq n$,
\item $h(X)=0$.
\end{enumerate} 
\end{theorem}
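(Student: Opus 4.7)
Since $(1)\Rightarrow(2)$ is immediate, it suffices to prove $(2)\Rightarrow(3)$ by contrapositive and $(3)\Rightarrow(1)$ directly. I first record the free upper bound $D\Lambda^p_X(n)\lesssim n$, valid for every $p$ and every bounded-degree connected $X$: for any $n$, take a vertex $v$ and the minimal $R$ with $\abs{B(v,R)}\geq n$. Connectivity and the degree bound give $\abs{B(v,R)}\leq (d+1)n$, while $v$ itself is an interior vertex of $B(v,R)$, so Proposition \ref{prop:detectinf} yields $Dh^p(B(v,R))\leq (d+1)^{1/p}$. Hence $D\Lambda^p_X\simeq n$ reduces to the matching lower bound $D\Lambda^p_X\gtrsim n$.

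\textbf{$(2)\Rightarrow(3)$.} Assume $h:=h(X)>0$; I would show $Dh^1(\Gamma)\geq h$ for every finite $\Gamma\leq X$, giving $D\Lambda^1_X(n)\geq hn$. Fix $f:V\Gamma\to[0,\infty)$ with $f|_{\partial_X\Gamma}\equiv 0$ and set $A_t=\setcon{v\in V\Gamma}{f(v)\geq t}$. Because $f$ vanishes on $\partial_X\Gamma$, every $A_t$ with $t>0$ lies in $V\Gamma\setminus\partial_X\Gamma$, so its $X$-neighbors all lie in $V\Gamma$, and the Cheeger inequality in $X$ gives $\abs{A_t}\leq h^{-1}\abs{\partial_X A_t}$. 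The co-area identity $\norm{f}_1=\int_0^\infty\abs{A_t}\,dt$ then reduces the problem to showing $\int_0^\infty\abs{\partial_X A_t}\,dt\leq \norm{\nabla f}_1$. Writing $m(v)=\min\setcon{f(u)}{uv\in E\Gamma}$, a vertex $v$ lies in $\partial_X A_t$ exactly when $m(v)<t\leq f(v)$, so Fubini yields $\int_0^\infty\abs{\partial_X A_t}\,dt=\sum_{v\in V\Gamma}(f(v)-m(v))_+\leq \sum_{v}\nabla f(v)=\norm{\nabla f}_1$.

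\textbf{$(3)\Rightarrow(1)$.} If $h(X)=0$, pick finite $\Gamma_k\leq X$ with $n_k:=\abs{V\Gamma_k}\to\infty$ and $b_k/n_k\to 0$, where $b_k:=\abs{\partial_X\Gamma_k}$, and take the indicator $f_k=1_{V\Gamma_k\setminus\partial_X\Gamma_k}$, which vanishes on $\partial_X\Gamma_k$. Then $\norm{f_k}_p^p=n_k-b_k$, while $\nabla f_k\leq 1$ is supported on $\partial_X\Gamma_k$ together with its neighbors in the interior, a set of at most $(d+1)b_k$ vertices, so $\norm{\nabla f_k}_p^p\leq (d+1)b_k$. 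Hence $n_k Dh^p(\Gamma_k)\leq n_k\bigl((d+1)b_k/(n_k-b_k)\bigr)^{1/p}\lesssim n_k(b_k/n_k)^{1/p}=o(n_k)$, so $D\Lambda^p_X(n_k)/n_k\to 0$ simultaneously for every $p\in[1,\infty)$, ruling out $D\Lambda^p_X\simeq n$. The delicate step is the co-area estimate in $(2)\Rightarrow(3)$: because $\nabla f$ is a max-gradient rather than an $\ell^1$-sum over edges, we must observe that the single neighbor $u$ of $v$ achieving $m(v)$ witnesses $v\in\partial_X A_t$ for every $t\in(m(v),f(v)]$, so the total boundary contribution from $v$ is charged only once against $\nabla f(v)$.
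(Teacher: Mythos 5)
Your proof is correct. The direction $(3)\Rightarrow(1)$ is the same as the paper's: plug in the indicator of the interior of a F\o lner set and estimate both norms. For $(2)\Leftrightarrow(3)$ you reorganise the paper's argument as a contrapositive: rather than starting from a subgraph with small $Dh^1$ and extracting a good level set (which is what the paper does, again via the co-area formula), you prove the pointwise bound $Dh^1(\Gamma)\geq h(X)$ for every finite $\Gamma$, which combined with your explicit universal upper bound $D\Lambda^p_X(n)\lesssim n$ gives $D\Lambda^1_X(n)\simeq n$ whenever $h(X)>0$. This buys you a cleaner quantitative statement (a sharp linear lower bound $D\Lambda^1_X(n)\geq h(X)\,n$) at essentially no extra cost; the engine in both versions is the same co-area duality $\norm{f}_1=\int_0^\infty\abs{A_t}\,dt$ versus $\int_0^\infty\abs{\partial_X A_t}\,dt\leq\norm{\nabla f}_1$, and your closing observation about charging each vertex only once against the max-gradient is exactly the point that makes the latter inequality work. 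One shared caveat: your identification ``$v\in\partial_X A_t$ iff $m(v)<t\leq f(v)$'' with $m(v)=\min\setcon{f(u)}{uv\in E\Gamma}$ tacitly assumes that an interior vertex of $\Gamma$ sees all of its $X$-neighbours through edges of $\Gamma$, i.e.\ that $\Gamma$ is (effectively) an induced subgraph; otherwise an $X$-edge absent from $E\Gamma$ could place $v$ in $\partial_X A_t$ for a longer interval of $t$ than $\nabla f(v)$ accounts for. The paper's own use of the co-area identity makes the same implicit assumption, so this is a convention to state rather than a gap in your argument.
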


\begin{proof} $(i)\Rightarrow(ii)$ is immediate. Fix $d$ to be the maximal degree of a vertex in $X$. We start with $(ii)\Rightarrow(iii)$. Suppose $D\Lambda^1_X(n)\not\simeq n$. From (\ref{eq:Dhp}) we have $Dh^1(\Gamma)\leq d+1$ for every finite subgraph $\Gamma$ of $X$. Therefore, there must be a sequence of finite subgraphs $\Gamma_n$ of $X$ such that $Dh^1(\Gamma_n)\leq \frac{1}{n}$.

For each $n$, let $f:V\Gamma_n\to\R$ satisfy $f|_{\partial_X\Gamma_n}\equiv 0$, $f\geq 0$, and
\[
 \frac{\norm{\nabla f}_1}{\norm{f}_1} \leq \frac{2}{n}.
\]
Using the co-area formula (cf.~\cite[Proposition 6.6]{HumeMackTess1})
\[
 \norm{\nabla f}_1 = \int_{\R^+} \abs{\partial_X\set{f>t}} dt,
\]
and $\norm{f}_1 = \int_{\R^+} \abs{\set{f>t}} dt$, we see that there is some $t>0$ such that $S_t=\set{f>t}\subset V\Gamma$ satisfies
\[
 \frac{\abs{\partial_X S_t}}{\abs{S_t}} \leq \frac{2}{n}.
\]
Thus $h(X)=0$.

Finally, we show $(iii)\Rightarrow(i)$. Suppose $h(X)=0$. If $X$ is finite there is nothing to prove, so assume it is infinite. There is a family of finite subgraphs $\Gamma_n$ ($n\geq 2$) of $X$ such that
\begin{equation}\label{eq:Folner}
 \frac{\abs{\partial_X \Gamma_n}}{\abs{\Gamma_n}}\leq \frac{1}{n}.
\end{equation}
Since $X$ is infinite and connected, $\abs{\partial_X \Gamma_n}\geq 1$, so $\abs{\Gamma_n}\geq n$. Let $f_n$ be the characteristic function of the set $V\Gamma_n\setminus\partial_X \Gamma_n$. By construction
\[
 \norm{f_n}_p = \left(\abs{V\Gamma_n\setminus\partial_X \Gamma_n}\right)^\frac1p\geq \left(\frac{n-1}{n}\abs{\Gamma_n}\right)^\frac1p,
\]
and $\nabla f_n$ is the characteristic function of the set of vertices in $\Gamma_n$ at distance $\leq 1$ from $\partial_X \Gamma_n$. Hence
\[
 Dh^p(\Gamma_n) \leq \frac{\norm{\nabla f_n}_p}{\norm{f_n}_p}\leq \left((d+1)\abs{\partial_X \Gamma_n}\frac{n}{(n-1)\abs{\Gamma_n}}\right)^\frac1p \leq \left(\frac{d+1}{n-1}\right)^\frac1p
\]
where the last step uses (\ref{eq:Folner}). It follows that $D\Lambda^1_X(n)\not\simeq n$, since for every $n$,
\begin{equation}\label{eq:folfunc}
 D\Lambda^p_X(\abs{\Gamma_n}) \leq \abs{\Gamma_n}Dh^p(\Gamma_n) \leq \abs{\Gamma_n}\left(\frac{d+1}{n-1}\right)^\frac1p.
\end{equation}
\end{proof}

\begin{corollary}\label{cor:amenp} Let $X$ be a graph of bounded degree. Then $D\Lambda^p_X(n)\not\simeq n$ for every $p\in[1,\infty)$ if and only if $h(X)>0$.
\end{corollary}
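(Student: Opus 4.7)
The plan is to deduce this corollary from Theorem \ref{thm:DPprofamen}, combined with the monotonicity result Theorem \ref{bthm:pleqq}. The three-way equivalence of the theorem, and in particular the equivalence $(i)\Leftrightarrow(iii)$, already asserts that the condition "$D\Lambda^p_X(n)\not\simeq n$ for every $p\in[1,\infty)$" is equivalent to a sharp condition on the Cheeger constant $h(X)$. The corollary is therefore a mild reformulation of that equivalence, and the proof consists of reading off the correspondence between the quantified profile condition and the sign of $h(X)$.

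Concretely, I would proceed in three short steps. First, I record the universal upper bound $D\Lambda^p_X(n) \lesssim n$: by inequality $(\ref{eq:Dhp})$, $Dh^p_X(\Gamma) \leq d+1$ for any finite $\Gamma \leq X$ with $V\Gamma \neq \partial_X\Gamma$, so choosing such a $\Gamma$ of size near $n$ gives $D\Lambda^p_X(n) \leq (d+1)(n + O(1))$. This reduces the asymptotic relation $D\Lambda^p_X(n) \simeq n$ to the one-sided lower bound $\gtrsim n$, so the relation $\not\simeq n$ is simply the failure of that lower bound. Second, I invoke Theorem \ref{bthm:pleqq}, $D\Lambda^p_X \lesssim D\Lambda^q_X$ for $1\leq p\leq q<\infty$: together with the universal upper bound, this shows that the quantified statement "$D\Lambda^p_X(n) \simeq n$ for every $p$" collapses to the single case "$D\Lambda^1_X(n) \simeq n$", since monotonicity propagates the relation upward in $p$ and downward by the universal upper bound. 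Third, I invoke $(ii)\Leftrightarrow(iii)$ of Theorem \ref{thm:DPprofamen}, which (in contrapositive form) gives $D\Lambda^1_X(n)\simeq n \Leftrightarrow h(X)>0$. Chaining these three observations delivers the biconditional of the corollary.

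The main obstacle, such as it is, is purely clerical: carefully aligning the form of the corollary's quantified statement with the equivalences of the preceding theorem, using monotonicity to collapse the quantifier. All substantive mathematical content--the Cheeger-constant-driven construction of functions with small Rayleigh quotient, the coarea identification, and the monotonicity of $D\Lambda^p$ in $p$--has already been established in Theorem \ref{thm:DPprofamen} and Theorem \ref{bthm:pleqq}, so no further estimates are needed.
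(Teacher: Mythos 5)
Your argument is correct and is essentially the paper's own proof, which likewise combines Theorem \ref{thm:DPprofamen} with the monotonicity statement (Proposition \ref{prop:DPmonotonep}, i.e.\ Theorem \ref{bthm:pleqq}) and the trivial upper bound $D\Lambda^p_X(n)\lesssim n$ coming from (\ref{eq:Dhp}). Note that what you (and the paper) actually establish is the equivalence ``$D\Lambda^p_X(n)\simeq n$ for every $p\in[1,\infty)$ $\Leftrightarrow$ $h(X)>0$''; the printed statement with $\not\simeq$ is a typo, since by Theorem \ref{thm:DPprofamen} the condition ``$D\Lambda^p_X(n)\not\simeq n$ for every $p$'' is equivalent to $h(X)=0$.
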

\begin{proof} The forward implication is immediate from Theorem \ref{thm:DPprofamen}, the reverse implication follows from monotonicity (Proposition \ref{prop:DPmonotonep}) and Theorem \ref{thm:DPprofamen}.
\end{proof}

Recall that for a graph $X$ satisfying $h(X)=0$ the \textbf{F\o lner function} is: \cite{Vershik_Amen_Approx}
\[
 F(n)=\min\setcon{k}{\frac{\abs{\partial_X\Gamma}}{\abs{\Gamma}}\leq\frac{1}{n} \textup{ for some }\Gamma\leq X\textup{ with } \abs{\Gamma}=k}.
\]
\begin{corollary}\label{cor:folfuncsuperlin} Let $X$ be a graph of bounded degree such that $h(X)=0$. Then for $p\in(1,\infty)$
\[
 D\Lambda^p_X(n)\lesssim \inf\setcon{\frac{m}{(\underline{F}_X(m))^\frac1p}}{m\geq n}
\]
where $\underline{F}_X(m)=\max\setcon{k}{F(k)\leq m}$. In the case $p=1$ we have 
\[
 D\Lambda^1_X(n)\simeq \inf\setcon{\frac{m}{\underline{F}_X(m)}}{m\geq n}.
\]
\end{corollary}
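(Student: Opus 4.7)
The plan is to prove the upper bound for all $p \in [1,\infty)$ by an elaboration of the construction in the proof of Theorem \ref{thm:DPprofamen}, and then to match it with a lower bound when $p = 1$ via the co-area formula.

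For the upper bound, fix $n$ and $m \geq n$. Let $k = \underline{F}_X(m)$, so $F(k) \leq m$, and pick a F\o lner subgraph $\Gamma_0 \leq X$ with $\abs{V\Gamma_0} = F(k)$ and $\abs{\partial_X\Gamma_0}/\abs{\Gamma_0} \leq 1/k$. Since $\Gamma_0$ may have fewer than $n$ vertices and thus be inadmissible for $D\Lambda^p_X(n)$, enlarge it: let $\Gamma$ be the full subgraph of $X$ on the closed $r$-neighbourhood of $V\Gamma_0$ in $X$, where $r \geq 1$ is chosen minimal with $\abs{V\Gamma} \geq m$. Bounded degree $d$ forces $\abs{V\Gamma} \leq (d+1)m$, and the choice $r \geq 1$ ensures that $\partial_X \Gamma$ is disjoint from $V\Gamma_0$. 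The test function $f = 1_{V\Gamma_0 \setminus \partial_X\Gamma_0}$ (extended by $0$) therefore vanishes on $\partial_X \Gamma$, satisfies $\norm{f}_p^p \geq (1 - 1/k) F(k)$, and has $\nabla f$ supported on the closed $1$-neighbourhood of $\partial_X\Gamma_0$, giving $\norm{\nabla f}_p^p \leq (d+1) F(k)/k$. Combining these shows $\abs{V\Gamma} Dh^p(\Gamma) \lesssim m/\underline{F}_X(m)^{1/p}$; taking the infimum over $m \geq n$ (with a trivial fallback from Proposition \ref{prop:detectinf} when $\underline{F}_X(m) \leq 1$) yields the upper bound.

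For the matching lower bound when $p = 1$, the co-area identity already used in the proof of Theorem \ref{thm:DPprofamen} identifies $Dh^1(\Gamma)$ with the infimum of $\abs{\partial_X S}/\abs{S}$ over non-empty $S \subseteq V\Gamma \setminus \partial_X \Gamma$. For any such $S$, the definition of $F$ forces $\abs{\partial_X S}/\abs{S} \geq 1/\underline{F}_X(\abs{S})$, and since $\abs{S} \leq \abs{V\Gamma}$ and $\underline{F}_X$ is non-decreasing, this is at least $1/\underline{F}_X(\abs{V\Gamma})$. Hence $\abs{V\Gamma} Dh^1(\Gamma) \geq \abs{V\Gamma}/\underline{F}_X(\abs{V\Gamma})$; infimising over $\abs{V\Gamma} \geq n$ yields $D\Lambda^1_X(n) \geq \inf\{m/\underline{F}_X(m) : m \geq n\}$, completing the $\simeq$. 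The main obstacle is precisely the upper bound at values of $m$ not in $F(\N)$: the direct bound (\ref{eq:folfunc}) only controls $D\Lambda^p_X$ at $m = F(k)$, which can miss the infimum entirely when $F$ has large gaps (for instance, when $F(\underline{F}_X(n)+1)$ greatly exceeds $n$ the infimum is essentially $n/\underline{F}_X(n)^{1/p}$, far smaller than $F(k)/k^{1/p}$ for any $k$ with $F(k) \geq n$); decoupling the size of the test graph $\Gamma$ from the size of the F\o lner set $\Gamma_0$ whose isoperimetric ratio is being exploited is the key non-routine ingredient.
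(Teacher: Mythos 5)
Your argument is correct and follows essentially the same route as the paper: the upper bound enlarges a F\o lner set realising $F(\underline{F}_X(m))$ to an admissible subgraph of size $\simeq m$ (the paper does this enlargement via Lemma \ref{lem:nesting}, which you re-derive by hand for your specific neighbourhood), and the $p=1$ lower bound uses the co-area formula exactly as in the proof of Theorem \ref{thm:DPprofamen}. The only quibble is that the definition of $F$ gives $\abs{\partial_X S}/\abs{S} > 1/(\underline{F}_X(\abs{S})+1)$ rather than $\geq 1/\underline{F}_X(\abs{S})$, which is harmless up to $\simeq$.
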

\begin{proof}
For the upper bound fix $m$, and let $\Gamma'$ be a subgraph of $X$ satisfying
\[
 \abs{\Gamma'}\leq m \quad \textup{and} \quad \frac{1}{\underline{F}_X(m)+1} <\frac{\abs{\partial_X\Gamma}}{\abs{\Gamma}} \leq \frac{1}{\underline{F}_X(m)}.
\]
From the proof of Theorem \ref{thm:DPprofamen} $(iii)\Rightarrow(i)$, we have that
\[
 Dh^p_X(\Gamma') \leq \left(\frac{d+1}{\underline{F}_X(m)-1}\right)^\frac1p
\]
Hence, for any $m$-vertex subgraph $\Gamma$ of $X$ containing $\Gamma$ we have
\[
 Dh^p_X(\Gamma) \leq \left(\frac{d+1}{\underline{F}_X(m)-1}\right)^\frac1p
\]
by Lemma \ref{lem:nesting}. Thus
\[
 D\Lambda^p_X(m) \leq m\left(\frac{d+1}{\underline{F}_X(m)-1}\right)^\frac1p
\]
as required. For the lower bound, let $\Gamma$ be a finite subgraph of $X$.
Fix $k$ maximal such that $Dh^1(\Gamma)\leq \frac{1}{k}$, so $\abs{\Gamma}\geq F(k)$. If $F(k)<n$ there is nothing to prove. If $F(k)\geq n$, then by assumption
\[
 \abs{\Gamma}Dh^1(\Gamma) \geq \frac{F(k)}{k+1} \geq \frac{F(k)}{2k} \gtrsim\inf\setcon{\frac{m}{\underline{F}_X(m)}}{m\geq n},
\]
as required.
\end{proof}

\subsubsection{Consequences for the F\o lner function}\label{sec:Folner}

We give three consequences of Corollary \ref{cor:folfuncsuperlin} for finitely generated groups.

\begin{theorem}\textup{\cite[Theorem 7.1]{Nowak_isop}}
Let $G$ be a finitely generated amenable group with finite linearly controlled asymptotic dimension. Then
\[
 D\Lambda^1_G(n) \simeq \inf\setcon{\frac{m}{\kappa(m)}}{m\geq n},
\]
where $\kappa$ is the inverse growth function of $G$.
\end{theorem}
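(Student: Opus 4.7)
The plan is to read the result off Corollary \ref{cor:folfuncsuperlin}, which already gives
\[
 D\Lambda^1_G(n) \simeq \inf\setcon{\frac{m}{\underline{F}_G(m)}}{m\geq n},
\]
so the entire task reduces to a two-sided comparison $\underline{F}_G(m)\simeq \kappa(m)$, up to a linear reparametrisation of $m$, between the inverse F\o lner function and the inverse growth function. Writing $V(k):=\abs{B(1,k)}$, this amounts to sandwiching $F_G(k)$ between two multiples of $V$ evaluated at linearly-related scales.

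For the upper direction of this comparison (the one that actually uses the hypothesis), I would quote Nowak's construction \cite[Theorem 7.1]{Nowak_isop}: finite linearly-controlled asymptotic dimension provides, at every scale $k$, a colouring of $G$ by uniformly bounded sets whose diameters grow linearly in $k$; taking the union of all the monochromatic pieces meeting a ball of radius proportional to $k$ yields a F\o lner set of boundary ratio at most $1/k$ and cardinality $\lesssim V(Ck)$. This gives $F_G(k)\lesssim V(Ck)$, equivalently $\underline{F}_G(m)\gtrsim \kappa(m)/C$.

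For the matching lower bound I would invoke the Coulhon--Saloff-Coste isoperimetric inequality, which holds unconditionally on every infinite Cayley graph: any finite $A\subset G$ satisfies $\abs{\partial A}/\abs{A}\gtrsim 1/\kappa(2\abs{A})$. A F\o lner set with boundary ratio at most $1/k$ therefore has $\abs{A}\gtrsim V(ck)$, which rearranges to $\underline{F}_G(m)\lesssim C'\kappa(m)$. Combining the two bounds yields $\underline{F}_G(m)\simeq \kappa(m)$ after absorbing constants into the $\simeq$ relation (both functions being monotone in $m$), and substitution into the formula from Corollary \ref{cor:folfuncsuperlin} gives the theorem. The only substantive obstacle is the upper bound on $F_G$, which is exactly Nowak's theorem and hence available off the shelf; everything else is a general fact about Cayley graphs or routine bookkeeping.
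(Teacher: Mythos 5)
Your proposal is correct and follows exactly the route the paper intends: the statement is presented as a consequence of Corollary \ref{cor:folfuncsuperlin}, with Nowak's theorem \cite[Theorem 7.1]{Nowak_isop} supplying the upper bound $F_G(k)\lesssim \abs{B(1,Ck)}$ and the Coulhon--Saloff-Coste inequality \cite{CoulSalCoste} (already used in the paper for Proposition \ref{prop:DPmonotoneinfty}) supplying the matching lower bound, so that $\underline{F}_G\simeq\kappa$ and the formula follows by substitution. No gaps; the bookkeeping between the two inverse functions is handled correctly.
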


\begin{theorem}\label{thm:itwreath}\textup{\cite{Ersh_isoprofs}} We have the following:
\begin{enumerate}
 \item For $G=\Z\wr\Z$, $D\Lambda^1_G(n)\simeq \frac{n\log\log n}{\log n}$.
 \item For $G=\Z_2\wr \Z^d$, $D\Lambda^1_G(n)\simeq \frac{n}{(\log n)^{1/d}}$.
 \item For $G=\Z\wr(\Z\wr(\Z\ldots(\Z\wr\Z))\ldots)$ where $\Z$ occurs $k$ times, we have $D\Lambda^1_G(n)\simeq n\left(\frac{\log\log n}{\log n}\right)^{1/k}$.
  \item For $G=((\ldots((\Z\wr\Z)\wr\Z)\ldots\wr\Z)$ where $\Z$ occurs $k$ times, we have $D\Lambda^1_G(n)\simeq \frac{n}{\phi^k n}$, where $\phi^k$ is the $k-1$-fold iteration of $\log$ divided by the $k$-fold iteration of $\log$.
\end{enumerate}
\end{theorem}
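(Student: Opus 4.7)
The plan is to invoke Corollary~\ref{cor:folfuncsuperlin} with $p=1$, which gives
$D\Lambda^1_G(n)\simeq \inf\{m/\underline{F}_G(m):m\ge n\}$, and then substitute Ershler's explicit computations of the F\o lner functions of these iterated wreath products from \cite{Ersh_isoprofs}. All four groups are amenable (each is an iterated wreath product with $\Z$), so the corollary applies, and the proof reduces to a translation of Ershler's estimates into the inverse-F\o lner-function language used here.

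For each group the recipe is the same: start with Ershler's asymptotic for $F_G(n)$, invert it to obtain $\underline{F}_G(m)$, and evaluate the infimum. The infimum collapses to a value at $m\simeq n$ because in every case considered, $m\mapsto m/\underline{F}_G(m)$ is non-decreasing up to multiplicative constants — equivalently, $\underline{F}_G$ is sub-linear — which is evident from the fact that $F_G$ grows at least exponentially in each case. Schematically, case (i) uses $F(n)\simeq n^n$ to get $\underline{F}_G(m)\simeq \log m/\log\log m$; case (ii) uses $\log F(n)\simeq n^d$ to get $\underline{F}_G(m)\simeq(\log m)^{1/d}$; case (iii) produces a $1/k$-power from the right-iterated structure; and case (iv) produces a tower whose inverse contributes the nested logarithm ratio $\phi^k$.

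The principal obstacle is bookkeeping, not mathematical content. Specifically, the right-iterated and left-iterated wreath products produce asymptotically very different F\o lner functions — the former an iterated exponential, the latter a tower — and the resulting inverse F\o lner functions differ by exactly the distinction between the $1/k$-power expression in (iii) and the iterated-logarithm expression in (iv). One must therefore track the iteration direction carefully and, in each case, confirm that the $\simeq$-ambiguity in $F_G$ does not blow up under inversion followed by the infimum. Given the super-polynomial growth of $F_G$ in every case this propagation of errors is harmless, and the theorem follows at once from the quoted estimates.
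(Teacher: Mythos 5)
Your proposal is correct and is exactly the paper's (implicit) argument: the paper offers no proof beyond citing Erschler, presenting the theorem as one of the ``consequences of Corollary~\ref{cor:folfuncsuperlin}'', i.e.\ one applies that corollary with $p=1$ and substitutes Erschler's asymptotics for the F\o lner functions of these wreath products, with the infimum collapsing at $m\simeq n$ because $\underline{F}_G$ grows sub-polynomially in every case. Your attention to the direction of iteration and to the stability of $\simeq$ under inversion is precisely the bookkeeping the citation leaves to the reader.
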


\begin{theorem}\textup{\cite{Ers06}}
For every function $f : \N \to \N$ such that $\lim_{n\to\infty}\frac{f(n)}{n}=0$, there is a finitely generated group of intermediate growth such that
\[
 D\Lambda^1_G(n)\gtrsim f(n).
\]
\end{theorem}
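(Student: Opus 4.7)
The plan is to apply the characterization $D\Lambda^1_G(n)\simeq\inf\setcon{m/\underline{F}_G(m)}{m\geq n}$ from Corollary \ref{cor:folfuncsuperlin} to reduce the statement to Ershler's theorem on Følner functions of groups of intermediate growth.

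First, I would translate the desired conclusion $D\Lambda^1_G(n)\gtrsim f(n)$ into a lower bound on the Følner function $F_G$. Replacing $f$ by its running maximum, one may assume $f$ is non-decreasing; this is harmless for a $\gtrsim$ statement. Corollary \ref{cor:folfuncsuperlin} then ensures $D\Lambda^1_G(n)\gtrsim f(n)$ provided $\underline{F}_G(m)\lesssim m/f(m)$ for all sufficiently large $m$, since in that case $\inf\setcon{m/\underline{F}_G(m)}{m\geq n}\gtrsim\inf\setcon{f(m)}{m\geq n}=f(n)$. Unwinding the definition $\underline{F}_G(m)=\max\setcon{k}{F_G(k)\leq m}$, the bound $\underline{F}_G(m)\lesssim m/f(m)$ is equivalent to a lower bound of the form $F_G(k)\gtrsim\phi(k)$, where $\phi(k)$ is roughly the largest $m$ with $f(m)/m\leq 1/k$. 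Since $f(n)/n\to 0$, the quantity $m/f(m)$ tends to infinity and $\phi$ is well-defined as a function $\N\to\N$; depending on how slowly $f(n)/n$ decays to zero, $\phi$ may grow arbitrarily quickly.

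Next, I would invoke Ershler's construction \cite{Ers06}, which produces, for every prescribed function $\psi:\N\to\N$, a finitely generated group $G$ of intermediate growth satisfying $F_G(k)\geq\psi(k)$ for all $k$. Applying this with $\psi=\phi$ yields a group of intermediate growth whose Følner function grows quickly enough that $\underline{F}_G(m)\lesssim m/f(m)$, and hence $D\Lambda^1_G(n)\gtrsim f(n)$, as required.

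The main obstacle is really Ershler's construction in \cite{Ers06}, which is used here as a black box; once this is available the remainder of the argument is a purely formal translation between Følner and Dirichlet-Poincaré data, handled by Corollary \ref{cor:folfuncsuperlin}. The only other minor point to check is that the reduction from $\underline{F}_G(m)\lesssim m/f(m)$ to $F_G(k)\gtrsim\phi(k)$ is insensitive to the constants hidden in $\lesssim$, which follows by adjusting $\phi$ by a multiplicative constant.
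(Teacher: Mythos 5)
Your proposal is correct and follows exactly the route the paper intends: the paper states this theorem without proof as one of the listed "consequences of Corollary \ref{cor:folfuncsuperlin}", i.e.\ one translates the desired lower bound on $D\Lambda^1_G$ into a lower bound on the F\o lner function via that corollary, and then quotes Ershler's construction in \cite{Ers06} of intermediate-growth groups with arbitrarily large F\o lner function. Your extra care about replacing $f$ by its running maximum and about the well-definedness of $\phi$ (using $f(n)/n\to 0$) fills in the bookkeeping the paper leaves implicit.
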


\subsection{Dependence on $p$}

Dirichlet-Poincar\'e profiles satisfy many of the same properties as the Poincar\'{e} profile $\Lambda_X^p$ such as monotonicity:

\begin{proposition}\label{prop:DPmonotonep} Let $X$ be a graph of bounded degree. For every $1\leq p\leq q <\infty$ there is a constant $C=C(p,q)$ such that
\[
 D\Lambda^p_X(n)\leq CD\Lambda^q_X(n).
\]
\end{proposition}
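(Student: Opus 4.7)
The plan is to reduce to a pointwise comparison of the Dirichlet-Poincar\'{e} constants themselves: I will establish that for every finite subgraph $\Gamma\leq X$ with $V\Gamma\neq\partial_X\Gamma$,
\[
Dh^p(\Gamma)\leq C(p,q,d)\,Dh^q(\Gamma),
\]
where $d$ is the maximum degree of a vertex in $X$. Multiplying through by $\abs{\Gamma}$ and taking the infimum over subgraphs with $\abs{V\Gamma}\geq n$ will then deliver the proposition, since $d$ is fixed for the ambient bounded-degree graph.

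To prove the pointwise comparison, I would take any non-negative test function $f:V\Gamma\to[0,\infty)$ with $f|_{\partial_X\Gamma}\equiv 0$ (recalling from the introduction that we lose nothing by restricting to non-negative $f$) and consider $g=f^{q/p}$. Since $q/p\geq 1$ and $f\geq 0$, the function $g$ still vanishes on $\partial_X\Gamma$, and summing directly gives $\norm{g}_p=\norm{f}_q^{q/p}$. The goal is then to bound $\norm{\nabla g}_p$ in terms of $\norm{\nabla f}_q$ and $\norm{f}_q$. Applying the mean value theorem to $t\mapsto t^{q/p}$ yields, for each edge $xy$,
\[
\abs{g(x)-g(y)}\leq \tfrac{q}{p}\max(f(x),f(y))^{q/p-1}\abs{f(x)-f(y)},
\]
which upgrades to the pointwise estimate $\nabla g(x)\leq \tfrac{q}{p}\,M(x)^{q/p-1}\nabla f(x)$, where $M(x)=\max\setcon{f(y)}{d_\Gamma(x,y)\leq 1}$.

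The next step is H\"{o}lder's inequality with conjugate exponents $q/p$ and $q/(q-p)$, applied to $\sum_x M(x)^{q-p}\nabla f(x)^p$; this yields $\norm{\nabla g}_p\leq \tfrac{q}{p}\norm{M}_q^{(q-p)/p}\norm{\nabla f}_q$. Bounded degree supplies $\norm{M}_q\leq (d+1)^{1/q}\norm{f}_q$, since in $\norm{M}_q^q$ each value $f(y)^q$ is counted at most $d+1$ times. Combining these estimates and dividing by $\norm{g}_p=\norm{f}_q^{q/p}$ gives
\[
\frac{\norm{\nabla g}_p}{\norm{g}_p} \leq \tfrac{q}{p}\,(d+1)^{(q-p)/(pq)}\,\frac{\norm{\nabla f}_q}{\norm{f}_q},
\]
and taking the infimum over $f$ completes the pointwise comparison with $C(p,q,d)=\tfrac{q}{p}(d+1)^{(q-p)/(pq)}$.

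The principal obstacle is choosing the right exponent: raising $f$ to the power $q/p$ is precisely what converts an $L^q$-extremiser into a legitimate $L^p$-test function, but this operation amplifies gradients in a way that must be controlled uniformly in $\Gamma$. The mean value estimate together with the bounded-degree bound on $\norm{M}_q$ versus $\norm{f}_q$ is what keeps the final constant a function of $p$, $q$, and $d$ only, rather than of the combinatorial structure of $\Gamma$.
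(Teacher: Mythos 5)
Your proof is correct and follows essentially the same route as the paper: substitute $f\mapsto f^{q/p}$ to turn an $L^q$-test function into an $L^p$-test function, control the gradient via the mean value theorem, and apply H\"older together with the bounded-degree bound on the neighbourhood maximum. Your use of the sharper form $|s^\alpha-t^\alpha|\leq\alpha\max(s,t)^{\alpha-1}|s-t|$ and the direct estimate $\norm{M}_q\leq(d+1)^{1/q}\norm{f}_q$ is a mild streamlining of the paper's argument, which instead uses the sum form of the mean value inequality and therefore needs the extra a priori bound $\norm{\nabla g}_q\leq(d+1)^{1/q}\norm{g}_q$ to absorb the $\nabla g$ contribution to the neighbourhood maximum.
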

\begin{proof}
Let $d$ be the maximal vertex degree of $X$. Choose $\Gamma\leq X$ with $n\leq \abs{\Gamma}<\infty$ and $g:V\Gamma\to[0,\infty)$ such that $g|_{\partial_X\Gamma}\equiv 0$ and
\[
 \abs{\Gamma}\frac{\norm{\nabla g}_q}{\norm{g}_q} \leq 2D\Lambda^q_X(n).
\]
Define $f:V\Gamma\to[0,\infty)$ by $f(v)=g(v)^{q/p}$. Now $\norm{f}^p_p= \norm{g}^q_q$. By (\ref{eq:Dhp}) we need only consider functions $g$ such that $\norm{\nabla_a g}_q \leq (d+1)^\frac1q\norm{g}_q$.

By the mean value theorem (see e.g.\ Matou\v{s}ek~\cite[Lemma 4]{Mat-97-ExpandersLp}), for every $s,t \in \R$ and $\alpha \geq 1$,
\[ |\{s\}^\alpha-\{t\}^\alpha| \leq \alpha (|s|^{\alpha-1}+|t|^{\alpha-1}) |s-t|. \]
For each $v\in V\Gamma$ we apply this to $s=g(v)$, $t=g(w)$, $\alpha=\frac{q}{p}$ for each edge $vw\in E\Gamma$ to see that
\[
 \nabla f(v) \leq\frac{2q}{p} g_1(v)^{\frac{q-p}{p}}\nabla g(v)
\]
where $g_1(v)=\max\setcon{\abs{g(w)}}{d_\Gamma(v,w)\leq 1}$. By definition $g_1(v) \leq g(v) + \nabla_g(v)$. Now
\begin{align*}
 & \norm{g}^q_q Dh^p_a(\Gamma)^p 
   = \norm{f}_p^p Dh^p_a(\Gamma)^p
 \\
 &
 \leq \sum_{v\in V\Gamma} \nabla f(v)^p 
 \\ &
 \leq \left(\frac{2q}{p}\right)^p \sum_{v\in V\Gamma} \left(\abs{g(v)}+\nabla g(v)\right)^{q-p} \nabla g(v)^p
 \\
 &
 \overset{(\star)}{\leq} \left(\frac{2q}{p}\right)^p 2^{q-p} \left( \sum_{v\in V\Gamma} \abs{g(v)}^{q-p}\nabla g(v)^p  + \norm{\nabla g}_q^q \right)
 \\ &
 \overset{(\dagger)}{\leq} \frac{2^q q^p}{p^p} \left( \norm{g}_q^{q-p} \norm{\nabla g}_q^p+ (d+1)^{\frac{q-p}{q}} \norm{g}_q^{q-p} \norm{\nabla g}_q^p \right)
 \\ &
 \preceq_{p,q} \norm{g}_q^{q-p} \norm{\nabla g}_q^p,
\end{align*}
where $(\star)$ follows from $(s+t)^\alpha \leq 2^\alpha(s^\alpha+t^\alpha)$ for any $s,t,\alpha>0$, and $(\dagger)$ follows from H\"older's inequality and $\norm{\nabla_a g}_q \leq (d+1)^\frac1q\norm{g}_q$. Rearranging and taking $p$th roots, we see that
\[
 Dh^p(\Gamma) \preceq_{p,q} \frac{\norm{\nabla g}_q}{\norm{g}_q}.
\]
\end{proof}

The relationship between $D\Lambda^\infty$ and $D\Lambda^1$ is a well-known inequality.

\begin{proposition}\label{prop:DPmonotoneinfty}\textup{\cite[Th\'eor\`eme 1]{CoulSalCoste}} Let $X$ be a graph of bounded degree satisfying the pseudo-Poincar\'e inequality (for example Cayley graphs of finitely generated groups)
\[
 \norm{f-f_r}_1\leq Cr\norm{\nabla f}_1
\]
where $f_r(x)=\abs{B(x,r)}^{-1}\sum_{v\in B(x,r)} f(v)$. Then $D\Lambda^\infty_X(n)\lesssim D\Lambda^1_X(n)$.
\end{proposition}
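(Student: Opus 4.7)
The plan is to reduce everything to a Varopoulos-type isoperimetric inequality comparing the F\o{}lner function and the growth function, and then to apply the characterisations of the two extremal profiles already established. The easy half is to dispose of the case $h(X)>0$: the equivalence in Theorem \ref{thm:DPprofamen} gives $D\Lambda^1_X(n)\simeq n$, while Proposition \ref{prop:inftycase} yields $D\Lambda^\infty_X(n) \lesssim n$ (simply take $m=n$ in the characterisation, noting that $\underline{\kappa}(n)\geq 1$ once $n\geq d+1$), so the statement is trivial. Henceforth I would assume $h(X)=0$, so that Corollary \ref{cor:folfuncsuperlin} gives $D\Lambda^1_X(n)\simeq \inf_{m\geq n}m/\underline{F}_X(m)$ and Proposition \ref{prop:inftycase} gives $D\Lambda^\infty_X(n)\simeq \inf_{m\geq n}m/\underline{\kappa}(m)$. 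Against these characterisations, the theorem reduces to the claim $\underline{F}_X(m) \lesssim \underline{\kappa}(2m)$: this inequality, after dividing into $m$, passing to the infimum over $m\geq n$, and invoking the (elementary) monotonicity of $D\Lambda^\infty_X$, immediately yields $D\Lambda^\infty_X(n)\lesssim D\Lambda^1_X(n)$.

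To establish the Varopoulos-type comparison I would apply the pseudo-Poincar\'e hypothesis to $f = 1_A$, where $A$ is a finite set realising $F_X(k)$, so $\abs{A}=F_X(k)$ and $\abs{\partial_X A}/\abs{A} \leq 1/k$. The right-hand side of $\norm{1_A - (1_A)_r}_1 \leq Cr\norm{\nabla 1_A}_1$ is $\lesssim r\abs{\partial_X A} \leq r\abs{A}/k$. For the left-hand side I would choose $r$ to be the smallest integer with $\min_{x\in VX} \abs{B(x,r)} \geq 2\abs{A}$; by the very definition of $\underline{\kappa}$ this forces $r \lesssim \underline{\kappa}(2\abs{A})$. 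With such an $r$, $(1_A)_r(x) \leq \abs{A}/\abs{B(x,r)} \leq 1/2$ pointwise, so $\abs{1_A(x) - (1_A)_r(x)} \geq 1/2$ for every $x\in A$, yielding $\norm{1_A - (1_A)_r}_1 \geq \abs{A}/2$. Comparing the two sides gives $k \lesssim r \lesssim \underline{\kappa}(2\abs{A})=\underline{\kappa}(2F_X(k))$, and inverting (using the monotonicity of $\underline{\kappa}$) delivers $\underline{F}_X(m) \lesssim \underline{\kappa}(2m)$.

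The main obstacle is conceptual: if one tried instead to extract a F\o{}lner set $S$ directly from a function $f$ realising $Dh^1_X(\Gamma)$ via the co-area formula, there is no a priori lower bound on $\abs{S}$, so the cardinality threshold in the definition of $D\Lambda^\infty_X(n)$ cannot be matched. Routing through Corollary \ref{cor:folfuncsuperlin} bypasses this, since the near-optimal $\Gamma$ there are genuine F\o{}lner sets whose sizes are controlled by $\underline{F}_X$. The non-vertex-transitive aspect of the hypothesis is handled automatically by taking $\min_x\abs{B(x,r)}$ in the choice of $r$, which is precisely what $\underline{\kappa}$ measures.
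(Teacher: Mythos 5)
Your proof is correct and follows exactly the route the paper intends: the paper gives no argument beyond citing \cite[Th\'eor\`eme 1]{CoulSalCoste} and remarking that the proposition is a direct consequence of Varopoulos' inequality, and your second paragraph is precisely the Coulhon--Saloff-Coste proof of that inequality (pseudo-Poincar\'e applied to $1_A$ with $r$ chosen so that all balls of radius $r$ have volume at least $2\abs{A}$), while your first paragraph correctly carries out the deduction via Proposition \ref{prop:inftycase} and Corollary \ref{cor:folfuncsuperlin}. In effect you have supplied the details the paper leaves to the reference; no gaps.
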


\subsection{Monotonicity with respect to subgroups}

\begin{theorem} Let $H$ be a finitely generated subgroup of a finitely generated group $G$. Then, for every $p\in[1,\infty]$,
\[
 D\Lambda^p_H(n) \lesssim  D\Lambda^p_G(n).
\]
\end{theorem}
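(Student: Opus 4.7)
The plan is to use an averaging argument over left cosets of $H$ in $G$, combined with the nesting lemma (Lemma \ref{lem:nesting}) to absorb any size deficit. First I fix finite generating sets $S_H\subseteq S_G$ of $H$ and $G$ with $S_G\cap H=S_H$; under this choice, for every left coset $kH$ the map $\phi_k:h\mapsto kh$ induces an isomorphism of the Cayley graph $X_H$ onto the induced subgraph of $X_G$ on $kH$.

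Given $n$, I select a near-optimiser $\Gamma_G\leq X_G$ with $\abs{V\Gamma_G}\geq n$ and $\abs{V\Gamma_G}Dh^p(\Gamma_G)\leq 2D\Lambda^p_G(n)$, together with a non-negative $f:V\Gamma_G\to[0,\infty)$ vanishing on $\partial_{X_G}\Gamma_G$ with $\norm{\nabla f}_p/\norm{f}_p\leq 2Dh^p(\Gamma_G)$. For representatives $\{k_i\}$ of the left cosets of $H$, let $\Gamma_i$ be the induced subgraph of $X_H$ on $\phi_{k_i}^{-1}(V\Gamma_G\cap k_iH)$, and define $f_i:V\Gamma_i\to[0,\infty)$ by $f_i(h)=f(k_ih)$. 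The inclusion $S_H\subseteq S_G$ ensures that any $X_H$-neighbour of $h$ outside $\Gamma_i$ maps under $\phi_{k_i}$ to an $X_G$-neighbour of $k_ih$ outside $\Gamma_G$, so $f_i|_{\partial_{X_H}\Gamma_i}\equiv 0$. The same edge inclusion combined with the fact that the cosets partition $V\Gamma_G$ yields
\[
 \sum_i \norm{f_i}_p^p = \norm{f}_p^p \quad \text{and} \quad \sum_i \norm{\nabla_H f_i}_p^p \leq \norm{\nabla_G f}_p^p.
\]
Pigeonhole then provides some $i$ with $\norm{\nabla_H f_i}_p/\norm{f_i}_p \leq \norm{\nabla_G f}_p/\norm{f}_p$, and hence $Dh^p_{X_H}(\Gamma_i)\leq 2Dh^p(\Gamma_G)$.

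It remains to deal with the fact that $\abs{V\Gamma_i}\leq\abs{V\Gamma_G}$ may be much smaller than $n$, so $\Gamma_i$ on its own need not feed into the infimum defining $D\Lambda^p_H(n)$. If $\abs{V\Gamma_i}\geq n$ there is nothing to do. Otherwise, enlarge $\Gamma_i$ to any subgraph $\Gamma_H'\leq X_H$ of size exactly $n$ (possible since we may assume $H$ is infinite; finite subgroups are handled by a finite number of scales). Lemma \ref{lem:nesting} yields $Dh^p(\Gamma_H')\leq Dh^p(\Gamma_i)$, and since $n\leq\abs{V\Gamma_G}$,
\[
 \abs{V\Gamma_H'}Dh^p(\Gamma_H') \leq n\cdot 2Dh^p(\Gamma_G) \leq 2\abs{V\Gamma_G}Dh^p(\Gamma_G) \leq 4D\Lambda^p_G(n),
\]
giving $D\Lambda^p_H(n)\lesssim D\Lambda^p_G(n)$. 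For $p=\infty$ the same scheme works with the summed pigeonhole replaced by the observations $\norm{f}_\infty=\max_i\norm{f_i}_\infty$ and $\max_i\norm{\nabla_H f_i}_\infty\leq\norm{\nabla_G f}_\infty$: choosing $i$ to attain the maximum of $\norm{f_i}_\infty$ gives $Dh^\infty(\Gamma_i)\leq Dh^\infty(\Gamma_G)$, after which the size argument is unchanged.

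The key technical point is the verification of the two coset inequalities displayed above, which depends entirely on the matching generating sets and on each $\phi_{k_i}$ carrying $X_H$-edges into $X_G$-edges. Once that is established, the expected stumbling block---that no single coset intersection $\Gamma_i$ need contain $n$ vertices---is dissolved cleanly by Lemma \ref{lem:nesting}, since enlarging a subgraph cannot increase $Dh^p$.
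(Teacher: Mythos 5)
Your proof is correct and follows essentially the same route as the paper: decompose a near-optimal finite subgraph of the Cayley graph of $G$ over left cosets of $H$, pull the test function back coset by coset (checking the Dirichlet boundary condition transfers), pigeonhole to find a coset whose piece has at least as good a ratio $\norm{\nabla f_i}_p/\norm{f_i}_p$, and then enlarge that piece via Lemma \ref{lem:nesting} to recover the required cardinality. The only cosmetic difference is at $p=\infty$, where the paper instead invokes Proposition \ref{prop:inftycase}, while you run the same coset argument with the sup-norm; both work.
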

\begin{proof} For $p=\infty$, this follows from Proposition \ref{prop:inftycase}. The rest of the proof is adapted from \cite[Theorem 18.100]{DK18}.

Let $S\subset T$ be finite symmetric generating sets for $H$ and $G$ respectively. Let $X=Cay(H,S)$ and $Y=Cay(G,T)$.

Let $\Gamma$ be a finite subgraph of $Y$ with $m$ vertices. Now $\Gamma$ intersects finitely many cosets of $H$ which we label $g_1H,\ldots,g_kH$ (note $k$ will depend on $\Gamma$). Denote $\Gamma_i=g_i^{-1}(\Gamma\cap g_iY)$ considered as a subgraph of $Y$. For each function $f:\Gamma\to [0,\infty)$ let $f_i:\Gamma_i\to[0,\infty)$ be defined by $f_i(x)=f(g_i(x))$. Now
\[
 \sum_{i=1}^k \norm{\nabla^{Y} f_i}_p^p \leq \norm{\nabla^X f}_p^p = \epsilon^p_{f,p} \norm{f}_p^p = \epsilon^p_{f,p}\sum_{i=1}^k \norm{f_i}_p^p,
\]
for some $\epsilon_{f,p}$. Therefore, there is some $i$ such that
\[
 \norm{\nabla^{Y} f_i}_p \leq \epsilon_{f,p}\norm{f_i}_p.
\]

Now if $f|_{\partial_Y(\Gamma)}\equiv 0$ then $f_i|_{\partial_X(\Gamma_i)}\equiv 0$, so $Dh^p_X(\Gamma_i)\leq \epsilon_{f,p}$. 

It is immediate that $\abs{\Gamma_i}\leq m$, by Lemma \ref{lem:nesting} we see that $Dh^p_X(\Gamma')\leq \epsilon_{f,p}$ for any $\Gamma'\leq Y$ which contains $\Gamma_i$.

Since, by definition, $Dh^p_Y(\Gamma)=\inf\setcon{\epsilon_{f,p}}{f:V\Gamma\to[0,\infty),\ f|_{\partial_Y(\Gamma)}\equiv 0}$ we have that for every $\Gamma\leq Y$ with $\abs{\Gamma}=m$ there is some $\Gamma'\leq X$ with $\abs{\Gamma'}=m$ and $Dh^p_X(\Gamma')\leq Dh^p_Y(\Gamma)$. Thus
\[
 D\Lambda^p_X(n) \leq  D\Lambda^p_Y(n).\qedhere
\]
\end{proof}

\subsection{Controlled F\o lner pairs}

Let us recall the definition.

\begin{defn}\cite[Definition $4.8$]{Tess_asymiso}
Let $X$ be a graph of bounded degree. We say a family of pairs of finite subsets of $VX$ $(H_m,H_m')_{m\in\N}$ is a \textbf{controlled sequence of F\o lner pairs} if there exists a constant $C\geq 1$ such that
\begin{itemize}
 \item $N_m(H_m)=\setcon{x\in VX}{d(x,H_m)\leq m} \subseteq H_m'$,
 \item $\abs{H_m'}\leq C\abs{H_m}$,
 \item $\diam(H_m')\leq Cm$.
\end{itemize}
\end{defn}

\begin{proposition} Let $G$ be a finitely generated group which admits a controlled sequence of F\o lner pairs. Then for all $p\in[1,\infty]$,
\begin{equation}\label{eq:p-cogrowth}
  D\Lambda_G^p(n)\lesssim \frac{n}{\kappa(n)}.
\end{equation}
\end{proposition}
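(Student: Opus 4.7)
The strategy is to use each controlled Følner pair $(H_m, H_m')$ as a test subgraph $\Gamma = H_m'$ and to build an explicit Lipschitz ``tent'' function that vanishes on $\partial_G H_m'$ but is large on the deep interior $H_m$. The two defining properties of a controlled Følner pair conspire to make this work: the inclusion $N_m(H_m) \subseteq H_m'$ gives room for the function to grow up to height $\sim m$ on $H_m$, while the ratio bound $|H_m'| \le C|H_m|$ keeps the gradient-support small compared to the mass.

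Concretely, I would set $f_m : V H_m' \to [0,\infty)$ by
\[
f_m(x) = \max\{0,\ d_G(x, VG \setminus V H_m') - 1\}.
\]
By construction $f_m$ vanishes on $\partial_G H_m'$, and since $d_G(\cdot, VG \setminus V H_m')$ is $1$-Lipschitz one has $\nabla f_m \le 1$ pointwise. For $x \in H_m$ the chain $B_G(x,m) \subseteq N_m(H_m) \subseteq H_m'$ forces $d_G(x, VG \setminus V H_m') \ge m+1$, so $f_m(x) \ge m$. For $p < \infty$ this gives $\|f_m\|_p^p \ge m^p |H_m|$ and $\|\nabla f_m\|_p^p \le |H_m'|$, and combined with $|H_m'| \le C|H_m|$ it yields $Dh^p_G(H_m') \le C^{1/p}/m$; the case $p = \infty$ is analogous, giving $Dh^\infty_G(H_m') \le 1/m$. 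Multiplying by $|H_m'|$, I obtain $|H_m'| \cdot Dh^p_G(H_m') \lesssim |H_m'|/m$ for every $p \in [1,\infty]$.

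To convert this into the advertised $n/\kappa(n)$ rate, the diameter constraint $\diam(H_m') \le Cm$ together with vertex-transitivity of $G$ gives $|H_m'| \le |B_G(1, Cm)|$, so by the definition of $\kappa$ we have $m \gtrsim \kappa(|H_m'|)$. At $n = |H_m'|$ this immediately yields $D\Lambda^p_G(n) \lesssim n/\kappa(n)$. For arbitrary $n$ I would pick the smallest $m$ with $|H_m'| \ge n$ and use monotonicity $D\Lambda^p_G(n) \le D\Lambda^p_G(|H_m'|)$ to transfer the estimate from the sequence $\{|H_m'|\}_m$ to all $n$.

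The hardest part will be this last interpolation step: the sequence $|H_m'|$ is not a priori guaranteed to have uniformly bounded multiplicative jumps, so the overshoot of $|H_m'|$ past $n$ could in principle spoil the ratio $|H_m'|/\kappa(|H_m'|)$ compared to $n/\kappa(n)$. For polycyclic groups this is automatic from polynomial growth; for $F \wr \Z$ and similar exponentially growing examples one must either refine the Følner sequence so that $|H_m'|/|H_{m-1}'|$ is bounded, or exploit the fact that for both polynomial and exponential growth one has $\kappa(Cn)$ comparable to $\kappa(n)$, so multiplicative dilations in $n$ are absorbed by the dilation freedom built into $\lesssim$.
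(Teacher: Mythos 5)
Your test function and the estimate $Dh^p_G(H_m')\lesssim C^{1/p}/m$ are correct and essentially identical to the paper's (the paper uses $f(v)=\max\{0,m-d(v,H_m)\}$, a tent over $H_m$ rather than over the complement of the boundary, but the two computations are interchangeable). The gap is exactly where you suspected: the final interpolation step as you propose it does not work, and neither of your suggested repairs is available in the generality of the statement. Taking the \emph{smallest} $m$ with $\abs{H_m'}\geq n$ gives $D\Lambda^p_G(n)\leq \abs{H_m'}\cdot C^{1/p}/m\lesssim \abs{H_m'}/\kappa(n)$, and the problem sits in the numerator $\abs{H_m'}$, not in the argument of $\kappa$; the definition of $\lesssim$ permits dilating the argument ($g(Cn)$) but not multiplying the output by an unbounded factor. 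The controlled F\o lner pair axioms only pin $\abs{H_m'}$ between $b_m$ and $b_{Cm}$ (where $b_k=\abs{B(1,k)}$), so for a group of exponential growth such as $F\wr\Z$ the jump $\abs{H_m'}/\abs{H_{m-1}'}$ can be of order $b_{Cm}/b_{m-1}$, which is unbounded; there is no license to ``refine the sequence'' since the hypothesis only provides the pairs as given.

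The correct fix goes in the opposite direction and uses Lemma \ref{lem:nesting}: take $m$ \emph{maximal} with $\abs{H_m'}\leq n$, and let $\Gamma$ be \emph{any} subgraph of $X$ with exactly $n$ vertices containing $H_m'$. Since enlarging a subgraph can only decrease the Dirichlet constant (extend the test function by zero), $Dh^p(\Gamma)\leq Dh^p(H_m')\lesssim C^{1/p}/m$, whence $D\Lambda^p_G(n)\leq n\cdot 2C^{1/p}/m$ with the honest $n$ in the numerator. The lower bound on $m$ then comes from maximality: $n<\abs{H_{m+1}'}\leq b_{C(m+1)}$ by the diameter bound, so $\kappa(n)\leq C(m+1)$ and $m\gtrsim\kappa(n)$. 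This is the step your argument is missing; without invoking the nesting monotonicity of $Dh^p$ you cannot place a competitor of size exactly $n$ into the infimum defining $D\Lambda^p_G(n)$, and the overshoot you identified is fatal rather than cosmetic.
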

\begin{proof} For each $m$ consider the function $f:H_m'\to[0,\infty)$ given by $f(v)=\max\set{0,m-d_X(v,H_m)}$. It is clear that $f|_{\partial_X H_m'}\equiv 0$, and (for $p\in[1,\infty)$)
\[
 Dh^p(H'_m)\leq \frac{\norm{\nabla f}_p}{\norm{f}_p} \leq \frac{2\abs{H'_m}^{\frac{1}{p}}}{\abs{H_m}^{\frac{1}{p}}m} \leq \frac{2C^{\frac{1}{p}}}{m}.
\]
while for $p=\infty$ we immediately have $Dh^\infty(H'_m)\leq\frac{2}{m}$.

Now for each $n$ choose $m$ maximal so that $\abs{H'_m}\leq n$ and let $\Gamma$ be any $n$-vertex subgraph of $X$ such that $H'_m\subseteq V\Gamma$. By Lemma \ref{lem:nesting}, $Dh^p(\Gamma) \leq \frac{2C^{\frac{1}{p}}}{m}$, hence
\[
 D\Lambda^p_X(n) \leq \frac{2C^{\frac{1}{p}}n}{m} \lesssim \frac{n}{\kappa(n)}. 
\]
Let us justify the final inequality. Define $b_m=\abs{B(1,m)}$. Now $b_m\leq \abs{H'_m}\leq n < \abs{H'_{m+1}} \leq b_{C(m+1)}$. Hence $m\leq \kappa(n) \leq C(m+1)$.
\end{proof}

\end{document}